\documentclass[12pt]{amsart}

\usepackage{amsthm, amsmath}
\usepackage{amssymb}
\usepackage[all]{xy}
\usepackage{mathrsfs}
\usepackage[latin1]{inputenc}
\usepackage{graphicx}
\usepackage{xspace}
\usepackage{enumerate}

\numberwithin{equation}{subsection}

\DeclareMathOperator{\Gal}{Gal}

\DeclareMathOperator{\Adm}{Adm}

\theoremstyle{plain}

\newtheorem{theorem}{Theorem}[section]

\newtheorem*{thrm1}{Theorem A}

\newtheorem*{thrm2}{Theorem B}

\newtheorem{thm}[theorem]{Theorem}
\newtheorem{cor}[theorem]{Corollary}

\newtheorem{prop}[theorem]{Proposition}

\theoremstyle{definition}
\newtheorem{rem}[theorem]{Remark}

\def\ge{\geqslant}
\def\le{\leqslant}
\def\a{\alpha}
\def\b{\beta}
\def\g{\gamma}
\def\G{\Gamma}

\def\e{\epsilon}

\def\s{\sigma}
\def\t{\tau}

\def\k{\kappa}
\def\l{\lambda}

\def\i{^{-1}}
\def\tSS{\tilde{\mathbb S}}
\def\SS{\mathbb S}

\def\NN{\mathbb N}
\def\QQ{\mathbb Q}
\def\RR{\mathbb R}

\def\ca{\mathcal A}

\def\cg{\mathcal G}

\def\cl{\mathcal L}

\def\co{\mathcal O}

\def\aa{\mathbf a}

\def\tW{\tilde W}

\usepackage{amssymb} 
\usepackage{latexsym} 
\usepackage{amsfonts} 
\usepackage{amsmath} 
\usepackage{eucal} 
\usepackage{bm} 
\usepackage{bbm} 
\usepackage{graphicx} 
\usepackage[english]{varioref} 
\usepackage[nice]{nicefrac} 
\usepackage[all]{xy}

\newcommand{\kk}{\Bbbk}

\newcommand{\FF}{\mathbbm{F}}

\def\<{\langle}
\def\>{\rangle}

\begin{document}

\title[Kottwitz-Rapoport conjecture]{Kottwitz-Rapoport conjecture on unions of affine Deligne-Lusztig varieties\\[.6cm]La conjecture de Kottwitz-Rapoport sur les unions de vari\'et\'es de Deligne-Lusztig affines}

\author[X. He]{Xuhua He}
\address{Department of Mathematics, University of Maryland, College Park, MD 20742, USA and department of Mathematics, HKUST, Hong Kong}
\thanks{X. He was partially supported by Hong Kong RGC grant 602011.}
\email{xuhuahe@math.umd.edu}
\keywords{Shimura varieties, affine Deligne-Lusztig varieties, Newton strata}
\subjclass[2010]{14M15, 14G35, 20G25}

\begin{abstract}
In this paper, we prove a conjecture of Kottwitz and Rapoport on a union of (generalized) affine Deligne-Lusztig varieties $X(\mu, b)_J$ for any tamely ramified group $G$ and its parahoric subgroup $P_J$. We show that $X(\mu, b)_J \neq \emptyset$ if and only if the group-theoretic version of Mazur's inequality is satisfied. In the process, we obtain a generalization of Grothendieck's conjecture on the closure relation of $\s$-conjugacy classes of a twisted loop group. 
\vskip1cm
Dans cet article nous prouvons une conjecture de Kottwitz et Rapoport sur l'union de vari\'et\'es de Deligne-Lusztig affines (g\'en\'eralis\'ees) $X(\mu,b)_J$ pour $G$ un groupe mod\'er\'ement ramifi\'e et $P_J$ son sous-groupe parahorique. Nous montront que $X(\mu, b)_J$ est non vide si et seulement si la version de l'in\'egalit\'e de Mazur pour les groupes est satisfaite. Au cours de la preuve, nous obtenons une g\'en\'eralisation de la conjecture de Grothendieck sur les inclusions des adh\'erences de classes de $\s$-conjugaison d'un groupe de lacets tordu.
\end{abstract}

\maketitle

\section*{Introduction}

\subsection{} The motivation of this paper comes from the reduction of Shimura varieties with a parahoric level structure. On the special fiber, there are two important stratifications:

\begin{itemize}

\item Newton stratification, indexed by specific $\s$-conjugacy classes in the associated $p$-adic group $G$. 

\item Kottwitz-Rapoport stratification, indexed by specific double cosets in $W_J \backslash \tW/W_J$, where $\tW$ is the Iwahori-Weyl group of $G$ and $W_J$ is the Weyl group of the parahoric subgroup $P_J$. 
\end{itemize}

A fundamental question is to determine which Kottwitz-Rapoport strata and which Newton strata are nonempty, in other words, to determine the double cosets of $W_J \backslash \tW/W_J$ and the subset of $\s$-conjugacy classes that appear in the reduction of Shimura varieties. 

It consists of two parts: local theory and global theory. In this paper, we focus on local theory. 

\subsection{} In \cite{Z} and \cite{PZ}, Pappas and Zhu give a group-theoretic definition of ``local models'' of Shimura varieties and show that the subset of $W_J \backslash \tW/W_J$ for the local model is the admissible set $\Adm_J(\mu)$ (defined in \S\ref{1.5}). 

The next question is to describe the $\s$-conjugacy classes arises in the reduction of Shimura varieties. Based on some foundational relations between Newton strata, Kottwitz-Rapoport strata and affine Deligne-Lusztig varieties, we study the set $X(\mu, b)_J$, a union of generalized affine Deligne-Lusztig varieties indexed by $\Adm_J(\mu)$. It is defined as follows. Let $L$ be the completion of the maximal unramified extension of a $p$-adic field and $b \in G(L)$, set
$$X(\mu, b)_J=\{g P_J \in G(L)/P_J; g \i b \s(g) \in \cup_{w \in \Adm_J(\mu)} P_J w P_J\}.$$ 

Kottwitz and Rapoport introduced a set $B(G, \mu)$ of acceptable $\s$-conjugacy classes, defined by the group-theoretic version of Mazur's theorem. The main purpose of this paper is to prove the following result, conjectured by Kottwitz and Rapoport in \cite{KR} and \cite{R:guide}. 

\begin{thrm1}
$X(\mu, b)_J \neq \emptyset$ if and only if $[b] \in B(G, \mu)$. 
\end{thrm1}


\subsection{} The direction $$X(\mu, b)_J \neq \emptyset \Rightarrow [b] \in B(G, \mu)$$ is the group-theoretic version of Mazur's inequality between the Hodge polygon of an F-crystal and the Newton polygon of its underlying F-isocrystal. The case where $G$ is an unramified group and $P_J$ is a hyperspecial maximal subgroup, is proved by Rapoport and Richartz in \cite[Theorem 4.2]{RR}. Another proof is given by Kottwitz in \cite{Ko3}. The case where $G$ is an unramified group and $P_J$ is an Iwahori subgroup, is proved in \cite[Notes added June 2003, (7)]{R:guide}.

The other direction $$X(\mu, b)_J \neq \emptyset \Leftarrow [b] \in B(G, \mu)$$ is the ``converse to Mazur's inequality'' and was proved by Wintenberger in \cite{Wi} in case $G$ is quasi-split. 

\subsection{} Another related question is to determine the non-emptiness pattern for a single affine Deligne-Lusztig variety. 

If $G$ is quasi-split and $P_J$ is a special maximal parahoric subgroup, then the non-emptiness pattern of a single affine Deligne-Lusztig variety is still governed by Mazur's inequality. It is conjectured and proved for $G=GL_n$ or $GSp_{2n}$ by Kottwitz and Rapoport in \cite{KR}. It is then proved by Lucarelli \cite{Luc} for classical split groups and then by Gashi \cite{Ga} for unramified cases. The general case is proved in \cite[Theorem 7.1]{He99}. Notice that if $P_J$ is a special maximal parahoric subgroup and $\mu$ is minuscule with respect to $\tW$, $X(\mu, b)_J$ is in fact a single affine Deligne-Lusztig variety. 

If $P_J$ is an Iwahori subgroup and $b$ is basic, a conjecture on the non-emptiness pattern (for split groups) is given by G\"ortz, Haines, Kottwitz, and Reuman in \cite{GHKR} in terms of $P$-alcoves in \cite{GHKR} and the generalization of this conjecture to any tamely ramified groups is proved in \cite{GHN}. The non-emptiness pattern for basic $b$ and other parahoric subgroups can then be deduced from Iwahori case easily. 

However, such information is not useful for the study of $X(\mu, b)_J$. The reason is that for $b$ basic, it is very easy to determine whether $X(\mu, b)_J$ is empty (by checking the image under Kottwitz map) and for other $b$, and non-special parahoric subgroup $J$, very little is known about the non-emptiness pattern for a single affine Deligne-Lusztig variety. 

\subsection{} Now we discuss the strategy of the proof of Theorem A. The key ingredients are 
\begin{itemize} 
\item the partial order on $B(G)$; 

\item some nice properties on the admissible set $\Adm_J(\mu)$;

\item the fact that the maximal element in $B(G, \mu)$ is represented by an element in the admissible set. 
\end{itemize}

We discuss the first ingredient in this subsection and the second and third ingredients in the next subsection. 

The starting point is the natural map $$\Psi: B(\tW, \s) \to B(G)$$ from the set of $\s$-conjugacy classes of $\tW$ to the set of $\s$-conjugacy classes of $G(L)$. This map is surjective, but not injective in general. However, there exists a natural section of $\Psi$ given by the straight $\s$-conjugacy classes of $\tW$ (see \S\ref{tri}). 

On the set of straight $\s$-conjugacy classes of $\tW$, there is a natural partial order $\preceq_\s$ (defined in \S\ref{5.4}). On $B(G)$, there are two partial orders, given by the closure relation between the $\s$-conjugacy classes and given by the dominance order of the corresponding Newton polygons. A generalization of Grothendieck conjecture says that the two partial orders on $B(G)$ coincide. We prove in Theorem \ref{tri-order} that 

\begin{thrm2}
For any twisted loop group, the partial order $\preceq_\s$ on the set of straight $\s$-conjugacy classes coincides with both partial orders on $B(G)$ via the map $\Psi: B(\tW, \s) \to B(G)$. In particular, the two partial orders on $B(G)$ coincide. 
\end{thrm2}

The proof is based on the reductive method in \cite{He99} \`a la Deligne and Lusztig, some remarkable combinatorial properties on $\tW$ established in \cite{HN} and the Grothendieck conjecture for split groups proved by Viehmann in \cite{Vi1}. 


\subsection{} By definition, $$X(\mu, b)_J \neq \emptyset \Leftrightarrow [b] \cap \cup_{w \in \Adm_J(\mu)} P_J w P_J \neq \emptyset.$$ Using a similar argument as in the proof of Theorem B, the latter condition is equivalent to $[b] \in \Psi(\Adm_J(\mu))$. 

Notice that Mazur's inequality is defined using the dominance order on the Newton polygons. For quasi-split groups, it is easy to see that $\mu$ is the unique maximal element in $B(G, \mu)$ with respect to the dominance order. Thus the converse to Mazur's inequality follows from the coincides between the partial order $\preceq_\s$ on the set of straight $\s$-conjugacy classes and the dominance order on the Newton polygons. For non quasi-split groups, the maximal element in $B(G, \mu)$ is harder to understand and we use \cite{HN-adm} on the properties of this element. 

The proof of Mazur's inequality is based on two properties of the admissible sets:

\begin{itemize}

\item The additivity of the admissible sets (Theorem \ref{add}), proved by Zhu's global Schubert varieties \cite{Z}.

\item The compatibility of admissible sets (Theorem \ref{comp}), proved by the ``partial conjugation method'' in \cite{HeMin}.

\end{itemize}

\section{Preliminaries}

\subsection{} Let $\mathbb F_q$ be the finite field with $q$ elements. Let $\kk$ be an algebraic closure of $\mathbb F_q$. Let $F$ be a finite field extension of $\mathbb Q_p$ with residue class field $\mathbb F_q$ and uniformizer $\varepsilon$ or $F= \mathbb F_q( (\e))$ be the field of Laurent series over $\mathbb F_q$. Let $L$ be the completion of the maximal unramified extension of $F$.

Let $G$ be a connected semisimple group over $F$ which splits over a tamely ramified extension of $F$.  Let $\s$ be the Frobenius automorphism of $L/F$. We also denote the induced automorphism on $G(L)$ by $\s$.

Let $S$ be a maximal $L$-split torus that is defined over $F$ and let $T$ be its centralizer. By Steinberg's theorem, $G$ is quasi-split over $L$. Thus $T$ is a maximal torus. Let $N$ be its normalizer. The {\it finite Weyl group} associated to $S$ is $$W_0=N(L)/T(L).$$ The Iwahori-Weyl group associated to $S$ is $$\tW=N(L)/T(L)_1,$$ where $T(L)_1$ denotes the unique Iwahori subgroup of $T(L)$. The Frobenius morphism $\s$ induces an action on $\tW$, which we still denote by $\s$. 

For any $w \in \tW$, we choose a representative in $N(L)$ and also write it as $w$. 

\subsection{} Let $\ca$ be the apartment of $G_L$ corresponding to $S$. Since $\s$ induces a permutation of finite order on the set of alcoves in $\ca$, there exists a $\s$-invariant alcove $\aa$ in $\ca$. Let $I$ be the corresponding Iwahori subgroup. Let $\tSS$ be the set of simple reflections of $\tW$. The set $\tSS$ is equipped with an action of $\s$. For any $J \subset \tSS$, let $W_J \subset \tW$ be the subgroup generated by the simple reflections in $J$ and by ${}^J \tW$ (resp. $\tW^J$) the set of minimal length elements for the cosets $W_J \backslash \tW$ (resp. $\tW/W_J$). We simply write ${}^J \tW^{J'}$ for ${}^J \tW \cap \tW^{J'}$. 

We follow \cite{HR}. Let $\G_F=\Gal(\bar L/F)$ be the absolute Galois group of $F$ and $\G=\Gal(\bar L/L)$ the inertia group. The Iwahori-Weyl group $\tW$ contains the affine Weyl group $W_a$ as a normal subgroup and we have a short exact sequence $$0 \to W_a \to \tW \to \pi_1(G)_\G \to 0,$$ where $\pi_1(G)$ denotes algebraic fundamental group of $G$ and $\pi_1(G)_\G$ its coinvariants under the action of $\s$. The choice of the alcove $\aa$ splits this extension, and $$\tW=W_a \rtimes \Omega,$$ where $\Omega$ is the normalizer of $\aa$, and is isomorphic to $\pi_1(G)_\G$. The length function and Bruhat order on $W_a$ extend in a natural way to $\tW$. 

We have another exact sequence $$0 \to X_*(T)_\G \to \tW \to W_0 \to 0.$$ We choose a special vertex of $\aa$ and represent $\tW$ as a semidirect product $$\tW=X_*(T)_\G \rtimes W_0=\{t^\l w; \l \in X_*(T)_\G, w \in W_0\}.$$

\subsection{} For $b, b' \in G(L)$, we say that $b$ and $b'$ are $\s$-conjugate if there exists $g \in G(L)$ such that $b'=g \i b \s(g)$. Let $B(G)$ be the set of $\s$-conjugacy classes. The classification of the $\s$-conjugacy classes is obtained by Kottwitz in \cite{Ko1} and \cite{Ko2}. The description is as follows. 

Let $\k_G: B(G) \to \pi_1(G)_{\G_F}$ be the Kottwitz map \cite[\S 7]{Ko2}. This gives one invariant. Another invariant is obtained by the Newton map. An element $b \in G(L)$ determines a homomorphism $\mathbb D \to G_L$, where $\mathbb D$ is the pro-algebraic torus whose character group is $\QQ$. This homomorphism determines an element $\nu_b$ in the closed dominant chamber $X_*(T)_\QQ^+$. The element $\nu_b$ is called the {\it Newton point} of $b$ and the map $b \mapsto \nu_b$ is called the {\it Newton map}. Note that for any $b$, $\s(\nu_b)=\nu_b$. By \cite[\S 4.13]{Ko2}, the map $$f: B(G) \to X_*(T)_\QQ^+ \times \pi_1(G)_{\G_F}, \qquad b \mapsto (\nu_b, \k_G(b))$$ is injective. 

\subsection{} Write $\s$ as $\s=\t \circ \s_0$, where $\s_0$ is a diagram automorphism of $G(L)$ such that $\s_0$ fixes $\tSS-\SS$ and the induced action of $\t$ on the adjoint group $G_{ad}$ is inner. 

For $\nu, \nu' \in X_*(T)_\QQ^+$, we write $\nu \le \nu'$ if $\nu'-\nu$ is a non-negative $\QQ$-linear combination of positive relative coroots. This is called the dominance order on $X_*(T)_\QQ^+$. 

Let $\mu \in X_*(T)^+$, we set $$\mu^\diamond=\frac{1}{N} \sum_{i=0}^{N-1} \s_0^i(\mu) \in X_*(T)_\QQ^+,$$ where $N$ is the order of $\s_0$. A $\s$-conjugacy class $[b]$ is called {\it (neutral) acceptable} for $\mu$ if $\nu_b \le \mu^\diamond$ and $\k_G(b)=\mu^\sharp$, where $\mu^\sharp$ is the image of $\mu$ in $\pi_1(G)_{\G_F}$. Let $B(G, \mu)$ be the set of (neutral) acceptable elements for $\mu$. 

\subsection{}\label{1.5} The {\it $\mu$-admissible set} is defined as $$\Adm(\mu)=\{w \in \tW; w \le t^{x(\underline \mu)} \text{ for some }x \in W_0\},$$ where $\underline \mu$ is the image of $\mu$ in $X_*(T)_\G$. 

More generally, let $J \subset \tSS$ such that $\s(J)=J$ and $W_J$ is finite. The $\mu$-admissible set associated to $J$ is $$\Adm^J(\mu)=W_J \Adm(\mu) W_J \subset \tW.$$ It is the inverse image under the natural map $\tW \to W_J \backslash \tW/W_J$ of $\Adm_J(\mu)$ in \cite[(3.6)]{KR}. 

\subsection{} Let $J \subset \tSS$ such that $\s(J)=J$ and $W_J$ is finite. Let $P_J \supset I$ be the standard parahoric subgroup corresponding to $J$. For any $w \in W_J \backslash \tW/W_J$ and $b \in G(L)$, the generalized affine Deligne-Lusztig variety $$X_{J, w}(b)=\{g \in G(L)/P_J; g \i b \s(g) \in P_J w P_J\}.$$ In this paper, we are mainly interested in the following finite union of affine Deligne-Lusztig varieties: \begin{align*} X(\mu, b)_J &=\{g \in G(L)/P_J; g \i b \s(g) \in Y_{J, \mu}\} \\ &=\cup_{w \in \Adm^J(\mu)} X_{J, w}(b),\end{align*} where $Y_{J, \mu}=\cup_{w \in \Adm(\mu)} P_J w P_J=\cup_{w \in \Adm^J(\mu)} I w I.$ 

Let $J' \subset \tSS$ such that $\s(J')=J'$ and $W_{J'}$ is finite and $J \subset J'$. Then $Y_{J', \mu}=P_{J'} Y_{J, \mu} P_{J'}$ and hence the projection map $G(L)/P_J \to G(L)/P_{J'}$ induces $$\pi_{J, J'}: X(\mu, b)_J \to X(\mu, b)_{J'}.$$

The main result of this paper is 

\begin{thm}\label{main}
Let $b \in G(L)$, $\mu \in X_*(T)^+$ and $J \subset J'$ be $\s$-stable subsets of $\tSS$ with $W_{J'}$ finite. Then 

(1) $X(\mu, b)_J \neq \emptyset$ if and only if $[b] \in B(G, \mu)$. 

(2) The map $\pi_{J, J'}$ is surjective. 
\end{thm}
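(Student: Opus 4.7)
The plan is to establish both parts by identifying $X(\mu,b)_J$ with the image of $\Adm^J(\mu)$ under the natural map $\Psi \colon B(\tW,\s) \to B(G)$, and then to use Theorem B together with the two structural properties of $\Adm^J(\mu)$ (additivity and compatibility) flagged in the introduction. The Iwahori--Bruhat decomposition $G(L) = \bigsqcup_{w \in \tW} I w I$ together with $Y_{J,\mu} = \bigcup_{w \in \Adm^J(\mu)} IwI$ immediately gives
\[
X(\mu,b)_J \neq \emptyset \iff [b] \cap Y_{J,\mu} \neq \emptyset,
\]
so the heart of (1) is the equality $B(G,\mu) = \Psi(\Adm^J(\mu))$.

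For the direction $\Psi(\Adm^J(\mu)) \subseteq B(G,\mu)$ (Mazur's inequality), I would take $w \in \Adm^J(\mu)$ with $[b] \cap IwI \neq \emptyset$ and apply the reduction method of \cite{He99} used in the proof of Theorem B to replace $w$ by a straight element $\tau$ in its $\s$-conjugacy class in $\tW$ with $\Psi([\tau]_\s) = [b]$. Since $w \le t^{x \underline{\mu}}$ up to $W_J$, comparing Kottwitz invariants gives $\kappa_G(b) = \mu^\sharp$, and comparing Newton points (via the straight element) gives $\nu_b \le \mu^\diamond$, using the fact that the Newton point is monotone along the Bruhat order on straight classes (Theorem B).

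For the converse $B(G,\mu) \subseteq \Psi(\Adm^J(\mu))$, I would identify the maximal element $[b_{\max}]$ of $B(G,\mu)$ with respect to dominance: in the quasi-split case this is $[t^\mu]$, which is already represented by $t^\mu \in \Adm(\mu) \subseteq \Adm^J(\mu)$; for non quasi-split $G$, one invokes \cite{HN-adm} to produce an explicit element of $\Adm^J(\mu)$ representing $[b_{\max}]$. Given an arbitrary $[b] \in B(G,\mu)$, one has $[b] \preceq [b_{\max}]$ in the dominance order, hence by Theorem B also in the order $\preceq_\s$ on the set of straight $\s$-conjugacy classes. Combining this with the additivity of admissible sets (Theorem~\ref{add}), which lets one propagate membership in $\Adm^J(\mu)$ downward along $\preceq_\s$, yields $[b] \in \Psi(\Adm^J(\mu))$.

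For (2), let $gP_{J'} \in X(\mu,b)_{J'}$, so that $x := g^{-1} b \s(g) \in Y_{J',\mu}$. Lifting $gP_{J'}$ to an element of $X(\mu,b)_J$ amounts to finding $p \in P_{J'}$ with $p^{-1} x \s(p) \in Y_{J,\mu}$, i.e.\ to showing that every element of $Y_{J',\mu}$ is $P_{J'}$-$\s$-conjugate into $Y_{J,\mu}$. This is precisely the content of the compatibility of admissible sets (Theorem~\ref{comp}), which is proved by the partial conjugation method of \cite{HeMin} applied to the finite quotient $W_{J'}/W_J$. Thus surjectivity of $\pi_{J,J'}$ follows formally.

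The main obstacle is the converse to Mazur's inequality in the non quasi-split case: identifying the maximal element of $B(G,\mu)$ and showing it is actually represented inside the admissible set $\Adm^J(\mu)$. Here the interplay between the three ingredients --- Theorem~B on the coincidence of partial orders on $B(G)$, the input from \cite{HN-adm}, and the additivity of $\Adm^J(\mu)$ --- does the real work; the compatibility theorem then simply transfers the result from $J = \emptyset$ (the Iwahori case, which is the natural setting for $\Psi$) to arbitrary $\s$-stable $J$ with $W_J$ finite, and simultaneously yields (2).
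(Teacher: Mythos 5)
Your overall architecture matches the paper: reduce (1) to the equality $B(G,\mu) = \Psi(\Adm^J(\mu))$ via Corollary~\ref{mu-closure}, prove Mazur's inequality and its converse separately, and deduce the general parahoric case from the Iwahori case using (2). However, you have swapped the roles of the two key inputs (additivity of admissible sets and \cite{HN-adm}), and this leaves a genuine gap in the Mazur's inequality direction.

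Concretely, you claim Mazur's inequality ($\Psi(\Adm^J(\mu)) \subseteq B(G,\mu)$) follows by passing to a straight element $\tau$ with $\Psi([\tau]_\s)=[b]$ and then ``comparing Newton points via Theorem~B.'' This does not work: Theorem~B controls the dominance order only between \emph{straight} classes, but $t^{x\underline\mu}$ (the element dominating $w \in \Adm(\mu)$) is not straight, so one cannot read off $\nu_b \le \mu^\diamond$ from $\tau \le w \le t^{x\underline\mu}$. Worse, when $\s_0 \neq \id$, even in the quasi-split case $\mu^\diamond$ is an average over $\s_0$-twists and there is no single translation element in $\Adm(\mu)$ whose Newton point is $\mu^\diamond$. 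The paper's actual argument (Theorem~\ref{Mazur}) is to expand $(w\s)^n = w\,\s(w)\cdots\s^{n-1}(w)$, observe $\s^i(\Adm(\mu)) = \Adm(\s_0^i(\mu))$, and then invoke \emph{additivity} (Theorem~\ref{add}) to conclude $(w\s)^n \in \Adm(n\mu^\diamond)$; from $t^\lambda \le t^{y(n\underline{\mu^\diamond})}$ one then gets $\bar\lambda \le n\mu^\diamond$. Additivity is indispensable precisely here, not in the converse. Your description of additivity as ``propagating membership in $\Adm^J(\mu)$ downward along $\preceq_\s$'' is also not what Theorem~\ref{add} says; closure of $\Adm(\mu)$ under the Bruhat order is immediate from its definition and needs no additivity.

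Conversely, the converse to Mazur ($B(G,\mu) \subseteq \Psi(\Adm^J(\mu))$) is where \cite{HN-adm} enters (Proposition~\ref{5.1}): it supplies a unique maximal Newton point $\nu$ among acceptable straight classes and an $x \in \Adm(\mu)$ with $\bar\nu_x = \nu$; then Theorem~B ($(3)\Rightarrow(1)$) gives $\co \preceq_\s x$, hence some $w \in \co_{\min}$ with $w \le x$, which lies in $\Adm(\mu)$ because $\Adm(\mu)$ is Bruhat-closed. No additivity is used here. Finally, for (2): the statement that every element of $Y_{J',\mu}$ is $P_{J'}$-$\s$-conjugate into $Y_{J,\mu}$ is \emph{not} by itself the content of Theorem~\ref{comp}; one needs to combine the compatibility result with the $P_J$-stable piece decomposition of $Y_{J,\mu}$ (see \S\ref{6-3}) to obtain $Y_{J,\mu} = P_J \cdot_\s Y_{\emptyset,\mu}$, from which (2) follows. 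You should make the stratification step explicit, since Theorem~\ref{comp} alone is a purely combinatorial statement about $\tW$ and does not directly produce the group-theoretic identity.
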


\section{The map $\Psi: B(\tW, \s) \to B(G)$}

\subsection{} We first recall the definition of straight elements of $\tW$. 

Let $w \in \tW$. Then there exists a positive integer $n$ such that $(w \s)^n=t^\l \in \tW \rtimes \<\s\>$ for some $\l \in X_*(T)_\G$. Let $\nu_{w, \s}=\l/n$ and $\bar \nu_{w, \s}$ be the unique dominant element in the $W_0$-orbit of $\nu_{w, \s}$. It is known that $\bar \nu_{w, \s}$ is independent of the choice of $n$ and is $\G$-invariant. 

We say that an element $w$ is {\it $\s$-straight} $\ell(w)=\<\bar \nu_{w, \s}, 2 \rho\>$, where $\rho$ is the half sum of all positive roots in the root system of the affine Weyl group $W_a$. This is equivalent to $\ell((w\s)^n)=n \ell(w)$, where we regard $w \s$ as an element in $\tW \rtimes \<\s\>$. A $\s$-conjugacy class of $\tW$ is called {\it straight} if it contains a $\s$-straight element. 

\subsection{}\label{tri} Let $B(\tW, \s)$ be the set of $\s$-conjugacy classes of $\tW$ and $B(\tW, \s)_{str}$ be the set of straight $\s$-conjugacy classes of $\tW$. Following \cite{He99}, there exists a commutative diagram
\[\tag{a} \xymatrix{B(\tW, \s) \ar[rr]^{\Psi} \ar[dr]_f & & B(G) \ar[ld]^-f \\ & X_*(T)_\QQ^+ \times \pi_1(G)_{\G_F} &,}\] where $\Psi: B(\tW, \s) \to B(G)$ is induced from the natural inclusion $N(L) \to G(L)$. 

By \cite[\S 3]{He99}, the restriction of $\Psi$ to $B(\tW, \s)_{str}$ is a bijection. For any straight $\s$-conjugacy class $\co$ of $\tW$, we denote by $[\co]$ the corresponding $\s$-conjugacy class in $G(L)$. We also set $\nu_\co=\bar \nu_{w, \s}$ for any $w \in \co$. 

\subsection{} By definition, for $w \in \tW$, $X_{\emptyset, w}(b) \neq \emptyset$ if and only if $[b] \in I w I \neq \emptyset$. If $\Psi(w)=[b]$, then automatically $[b] \cap I w I \neq \emptyset$, i.e. $X_{\emptyset, w}(b) \neq \emptyset$. The converse, is far from being true. In \cite[Theorem 6.1]{He99}, we give a criterion about the non-emptiness pattern of affine Deligne-Lusztig varieties in affine flag varieties in terms of class polynomials of affine Hecke algebras. The computation of class polynomials, however, is very hard in general.

The main result of this section is the following simple criterion of the non-emptiness criterion for ``closed'' affine Deligne-Lusztig varieties in affine flag varieties. 

\begin{thm}\label{w-closure}
Let $b \in G(L)$ and $w \in \tW$. Then $\cup_{w' \le w} X_{\emptyset, w'}(b) \neq \emptyset$ if and only if $[b] \in \cup_{w' \le w} \Psi(w')$. 

\end{thm}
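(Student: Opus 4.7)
\smallskip

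\noindent\textbf{Proof plan.} The ``if'' direction is immediate: if $[b] = \Psi(w')$ with $w' \le w$, then $w' \in N(L) \cap Iw'I$ represents $[b]$, so $X_{\emptyset, w'}(b) \neq \emptyset$ and the union over $w' \le w$ is a fortiori nonempty.

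For the converse, the plan is to prove the sharper statement $(\star)$: for any $x \in \tW$ and any $b \in G(L)$, if $[b] \cap IxI \neq \emptyset$ then there exists a $\sigma$-straight $y \in \tW$ with $y \le x$ in the Bruhat order such that $[b] = \Psi(y)$. Granted $(\star)$, the theorem follows at once: given $w' \le w$ with $X_{\emptyset, w'}(b) \neq \emptyset$, applying $(\star)$ to $x = w'$ yields a straight $y \le w' \le w$ with $\Psi(y) = [b]$.

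I prove $(\star)$ by induction on $\ell(x)$, following the ``reductive method'' of \cite[\S 3]{He99} combined with the combinatorial reduction of \cite{HN}. If $x$ is already $\sigma$-straight, take $y = x$; since $\Psi$ restricts to a bijection on the set of straight classes, $[b] \cap IxI \neq \emptyset$ forces $\Psi(x) = [b]$. Otherwise, either (a) $x$ is not of minimum length in its $\sigma$-conjugacy class $\mathcal{O} \subset \tW$, or (b) $x$ is of minimum length in $\mathcal{O}$ but $\mathcal{O}$ is not straight. In case (a), \cite{HN} supplies a simple reflection $s \in \tSS$ with $\ell(sx\sigma(s)) = \ell(x) - 2$; the standard Deligne-Lusztig-type decomposition then shows that after $\sigma$-conjugation by a suitable element of $I$, any element of $IxI$ moves into $Isx\sigma(s)I \cup IsxI$, so $[b]$ meets one of these two pieces. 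Since $sx\sigma(s)$ and $sx$ both lie strictly below $x$ in the Bruhat order, the induction hypothesis supplies the required straight $y \le x$.

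The main obstacle is case (b): here $x$ is already of minimum length in $\mathcal{O}$, so no further length reduction is available, and yet $\mathcal{O}$ is not straight. The tool is the structural description of minimum length elements from \cite{HN}, exhibiting $x$ in a factored form $x = u \cdot t^\mu \cdot \tau$ with $u \in W_K$ for some proper $\sigma$-stable $K \subsetneq \tSS$ with $W_K$ finite, and $\tau \in \Omega$, compatibly with a bijection between the non-straight part of $B(\tW, \sigma)$ and straight classes in smaller Iwahori-Weyl groups. Locating, inside this picture, a $\sigma$-straight $y \le x$ in the Bruhat order with $\Psi(y) = \Psi(\mathcal{O}) = [b]$ is the technical heart of the argument, and the Bruhat comparison is the delicate point: one must keep the translation part $t^\mu$ fixed throughout the reduction while replacing $u$ by a straight representative in $W_K$ that remains below $u$ in Bruhat order.
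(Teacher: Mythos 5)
Your overall plan is sound: the ``if'' direction is indeed immediate, and the auxiliary statement $(\star)$ you isolate (if $[b]\cap IxI\neq\emptyset$, then some $\s$-straight $y\le x$ has $\Psi(y)=[b]$) is exactly the right lemma, equivalent to the paper's formulation $G(L)\cdot_\s IxI\subset\cup_{\co\preceq_\s x}[\co]$. The case split (straight base case, non-minimal length, minimal but non-straight) also matches the paper. However, both inductive cases as you have sketched them contain genuine gaps. In case (a), \cite{HN} does \emph{not} supply a simple reflection $s$ with $\ell(sx\s(s))=\ell(x)-2$ directly; it supplies a chain $x\to_\s x'$ in which the first length drop may occur only after several length-preserving conjugations, so you must first pass to some $x'$ with $x\approx_\s x'$ and only then reduce. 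After doing so, induction gives you a straight $y\le sx'$ or $y\le sx'\s(s)$, hence $y\le x'$ --- but $(\star)$ demands a straight element below $x$, not below $x'$. Transporting the Bruhat comparison along $\approx_\s$ requires the paper's Proposition~\ref{preceq} (compatibility of $\preceq_\s$ with the relation $\to_\s$), and the straight element you end up with is in general a \emph{different} representative of the same class $\co_y$; without this non-trivial combinatorial fact, your induction does not close.

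Case (b) has an additional problem: the factorization you quote ($x=u\cdot t^\mu\cdot\tau$ with $\tau\in\Omega$) is not what \cite{HN} gives. Theorem~\ref{ux}(2) produces $w'=ux'$ with $x\approx_\s w'$, $u\in W_J$, and $x'$ an $\s$-straight element with $x'\in{}^J\tW^{\s(J)}$, $x'\s(J)=J$; in general $x'$ is not a length-zero translation. The Bruhat comparison you flag as ``the delicate point'' is actually trivial here ($x'\le ux'$ since $\ell(ux')=\ell(u)+\ell(x')$). The genuinely substantive ingredients, which your sketch omits entirely, are \S\ref{g-stable}(3)--(4): that $G(L)\cdot_\s Ix'I$ is a \emph{single} $\s$-conjugacy class when $x'$ is of minimal length, and that $G(L)\cdot_\s Iux'I=G(L)\cdot_\s Ix'I$ for $u\in W_J$. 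These are what force $\Psi(x')=[b]$. And again, since the factorization applies to $w'$ with $x\approx_\s w'$ rather than to $x$ itself, Proposition~\ref{preceq} is needed one more time to convert $x'\le w'$ into the existence of a straight element of $\co_{x'}$ below $x$. Without those two ingredients your proposal does not go through.
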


To prove this theorem, we combine the method for the finite case \cite[Proposition 5.8]{HeMin} and \cite[Proposition 2.5]{He-aff}, with the reduction method \cite[Section 3]{He99}. The proof will be given in \S \ref{proof-w-closure}. 

\subsection{} For $w, w' \in \tW$ and $s \in \tSS$, we write $w \xrightarrow{s}_\s w'$ if $w'=s w \s(s)$ and $\ell(w') \le \ell(w)$. We write $w \to_\s w'$ if there is a sequence $w=w_0, w_1, \cdots, w_n=w'$ of elements in $\tW$ such that for any $k$, $w_{k-1} \xrightarrow{s}_\s w_k$ for some $s \in \tSS$. We write $w \approx_\s w'$ if $w \to_\s w'$ and $w' \to_\s w$ and write $w \tilde \approx_\s w'$ if $w \approx_\s \t w' \s(\t) \i$ for some $\t \in \Omega$. It is easy to see that $w \approx_\s w'$ if $w \to_\s w'$ and $\ell(w)=\ell(w')$.

For any $\s$-conjugacy class $\co$ in $\tW$, we denote by $\co_{\min}$ the set of minimal length elements in $\co$. Now we recall some properties on the minimal length elements, obtained in \cite[\S 2]{HN}. 

\begin{thm}\label{ux}
Let $\co$ be a $\s$-conjugacy class of $\tW$ and $w \in \co$. Then there exists $w' \in \co_{\min}$ such that 

(1) $w \to_\s w'$;

(2) There exists $J \subset \tSS$ with $W_J$ finite, an $\s$-straight element $x \in \tW$ with $x \in {}^J \tW^{\s(J)}$ and $x \s(J)=J$, and $u \in W_J$, such that $w'=u x$. 
\end{thm}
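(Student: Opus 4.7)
The plan is to establish the two claims (1) and (2) in sequence: first reduce $w$ via $\to_\s$ to some minimum length representative, and then, moving within $\co_{\min}$ if necessary, arrive at a $w' \in \co_{\min}$ admitting the decomposition $w' = ux$ of part (2).

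\textbf{Stage 1 (reaching $\co_{\min}$).} I would argue by induction on $\ell(w) - \min_{w'' \in \co} \ell(w'')$. The crucial combinatorial input is: if $w \notin \co_{\min}$, then there exists a sequence $w = w_0, w_1, \dots, w_k$ in $\tW$ with $w_{i-1} \xrightarrow{s_i}_\s w_i$ for each $i$ (each step length-preserving), together with a further $s \in \tSS$ satisfying $\ell(s w_k \s(s)) < \ell(w_k)$. This is the $\s$-twisted analog of the Geck--Pfeiffer cyclic-shift lemma for finite Coxeter groups, lifted to the Iwahori-Weyl group $\tW$, and I would prove it by a Bruhat-graph argument using the strong exchange condition in $\tW$. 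Induction then completes Stage 1.

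\textbf{Stage 2 (decomposition of $w' \in \co_{\min}$).} Let $w' \in \co_{\min}$ and set $\nu = \bar\nu_{w',\s} = \nu_\co$. Choose $J \subset \tSS$ to be the set of simple affine reflections whose wall contains a fixed representative of $\nu$; then $W_J$ is finite because $\nu$ lies in some closed Weyl chamber. The $\s$-invariance of $\nu$ provides the required $\s$-compatibility of $J$. Next, write $w' = u_1 x u_2$ with $x$ of minimal length in the double coset $W_J w' W_{\s(J)}$ (so $x \in {}^J\tW^{\s(J)}$), $u_1 \in W_J$, $u_2 \in W_{\s(J)}$. Since $W_J$ is finite and $w' \in \co_{\min}$, a length-preserving cyclic $\s$-conjugation by an element of $W_{\s(J)}$ inside the finite parabolic (a Geck--Pfeiffer move) absorbs $u_2$ onto the left, yielding $w' = ux$ with $u \in W_J$.

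It remains to check $x\s(J) = J$ and that $x$ is $\s$-straight. For the first, $J$ is recoverable as the stabilizer in $\tSS$ of the Newton point $\nu$, and the Newton point of $x$ agrees with $\nu$ up to conjugation by the finite element $u$; the compatibility of the resulting parabolic stabilizers under $\s$ forces $x\s(J) = J$. For the second, I would expand $(w'\s)^n = (ux\s)^n$ inside $\tW \rtimes \langle \s \rangle$, using $x\s(J) = J$ to confine all the resulting $u$-factors to the finite group $W_J$ so their total length contribution stays uniformly bounded; a careful length comparison exploiting the minimality of $\ell(w')$ then forces $\ell((x\s)^n) = n\ell(x)$, i.e.\ $\s$-straightness.

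\textbf{Main obstacle.} I expect Stage 2 to be the real difficulty; Stage 1 is a clean induction once the cyclic-shift lemma is in hand. Within Stage 2, the delicate point is arranging the three conditions $x \in {}^J\tW^{\s(J)}$, $x\s(J) = J$, and $\s$-straightness of $x$ simultaneously, while remaining at minimal length in $\co$. The key tool is that all of the final reductions can be carried out inside the finite parabolic $W_J$, where Geck--Pfeiffer theory is available and the infinite affine combinatorics of $\tW$ no longer enters the argument.
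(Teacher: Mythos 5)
The paper does not prove Theorem \ref{ux}; it is imported verbatim from \cite[\S 2]{HN} as a known result, so there is no in-house argument to compare yours against line by line. What you are doing is attempting to reconstruct the He--Nie proof, and while your two-stage architecture (first reduce to $\co_{\min}$, then decompose a minimal-length element as $ux$) does mirror the shape of their argument, the proposal has real gaps at the places that make the theorem hard.

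\textbf{Stage 1.} You propose to prove the $\s$-twisted cyclic-shift lemma in $\tW$ ``by a Bruhat-graph argument using the strong exchange condition.'' This is exactly the step that fails to be routine. For finite Coxeter groups the Geck--Pfeiffer cyclic-shift theorem is proved by induction on rank, and the standard Bruhat-order/exchange arguments do not carry over to the affine Weyl group: $\tW$ is infinite, conjugacy classes are infinite, and the induction has no floor. He and Nie's actual proof replaces rank induction with an analysis of the Newton point $\bar\nu_{w,\s}$ together with a geometric argument on the alcove walk of $w$; this machinery is the genuine content of their paper and cannot be waved away as a Bruhat-graph argument. Your Stage 1 induction on $\ell(w)-\min_{w''\in\co}\ell(w'')$ would only close if the cyclic-shift lemma were already available, so as written the argument is circular at precisely the hard point.

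\textbf{Stage 2.} Your choice of $J$ is not the right one. The Newton point $\nu=\bar\nu_{w',\s}$ is a rational coweight, not a point of the apartment in general position, and ``the set of simple affine reflections whose wall contains a fixed representative of $\nu$'' does not produce the $J$ of the theorem; indeed $J$ in He--Nie depends on the specific minimal-length element $w'$, not only on the conjugacy class invariant $\nu$. Moreover the ``absorption'' step is not correctly set up for twisted conjugation: if $w'=u_1xu_2$ with $u_2\in W_{\s(J)}$, then $\s$-conjugating by $u_2$ produces $u_2u_1xu_2\s(u_2)^{-1}$, and $\s(u_2)$ lies in $W_{\s^2(J)}$, not in $W_{\s(J)}$, so the factor does not cancel without an additional argument that you have not supplied. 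Finally, the claim that minimality of $\ell(w')$ plus $x\s(J)=J$ forces $\s$-straightness of $x$ is the kind of statement that in He--Nie requires a separate lemma comparing $\ell((w'\s)^n)$ with $\langle\bar\nu_{w',\s},2\rho\rangle$; ``a careful length comparison'' is a plan, not an argument, and it is not obvious how to carry it out without already having the Newton-point machinery from Stage 1 in hand. If you want to push this through, the right move is to study the He--Nie paper directly; the Newton point $\bar\nu_{w,\s}$ and the associated alcove geometry are what makes both stages work, and they do not appear in your proposal.
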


\begin{thm}\label{min-str}
Let $\co$ be a straight $\s$-conjugacy class of $\tW$ and $w, w' \in \co_{\min}$. Then $w \tilde \approx_\s w'$. 
\end{thm}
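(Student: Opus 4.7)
The plan is to apply Theorem \ref{ux} to reduce $w$ and $w'$ to canonical $\s$-straight forms and then to show that any two such forms in the same straight class are related by $\tilde\approx_\s$.

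As a first step, I would establish a refinement of Theorem \ref{ux} in the straight setting: any $w \in \co_{\min}$ satisfies $w \approx_\s x$ for some $\s$-straight $x$ with $x \in {}^J \tW^{\s(J)}$, $x\s(J) = J$, and $W_J$ finite. Applying Theorem \ref{ux} to $w$ produces $v \in \co_{\min}$ of the form $v = ux$ with $w \to_\s v$. Minimality of $w$ forces $\ell(w) = \ell(v)$, so $w \approx_\s v$. The parabolic length formula gives $\ell(v) = \ell(u) + \ell(x)$, while straightness of $\co$ combined with $\bar\nu_{v,\s} = \bar\nu_{x,\s}$ yields $$\ell(v) = \<\bar\nu_{v,\s}, 2\rho\> = \<\bar\nu_{x,\s}, 2\rho\> = \ell(x),$$ using that $x$ is $\s$-straight. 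Hence $\ell(u) = 0$, forcing $u = 1$ and $w \approx_\s x$. Applying the same argument to $w'$ produces $w' \approx_\s x'$ with analogous structure, so the theorem reduces to showing $x \tilde\approx_\s x'$.

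For this second step, both $x$ and $x'$ are $\s$-straight in canonical form and lie in the same straight $\s$-conjugacy class, hence share the common invariants $\bar\nu_{x,\s} = \bar\nu_{x',\s}$ and $\k_G(x) = \k_G(x')$. My strategy is to first produce $\t \in \Omega$ so that $\t x' \s(\t)\i$ satisfies the structural condition with the same distinguished subset $J$ as $x$, reducing to the case of a common $J$. Once this alignment is achieved, both $x$ and $\t x' \s(\t)\i$ act as length-zero automorphisms of $W_J$ via $\s$, and the $\s$-conjugation action of $W_J$ on each of them is length-preserving. A "partial conjugation" argument in the spirit of \cite{HeMin} should then furnish an explicit $\approx_\s$-zigzag in $W_J$ connecting them, leveraging that their $\s$-twisted actions on $W_J$ are determined by the common invariants.

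The main obstacle is twofold: first, producing the alignment of $J$ and $J'$ by an element of $\Omega$; second, constructing the explicit $\approx_\s$-path inside $W_J$ after the alignment. The first point requires exploiting the rigidity imposed by $\bar\nu$ and $\k_G$ together with the fact that, up to $\Omega$-twist, the subsets occurring in Theorem \ref{ux} for elements of a single straight class form a single conjugacy class of finite subsets of $\tSS$. The second point I expect to handle by induction on the semisimple rank of $W_J$, invoking the finite Weyl group analogue available in \cite{He99}, with the $\Omega$-twist in the definition of $\tilde\approx_\s$ precisely absorbing the diagram-automorphism component of $\s$ that cannot be realized by $\approx_\s$-moves inside $W_a$.
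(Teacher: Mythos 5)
The paper does not prove this theorem; it is recalled from \cite[\S 2]{HN}, so there is no in-paper argument for you to match against --- you are attempting a proof from scratch.

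Your step 1 is a genuine idea and, modulo one gap, is sound. The gap is the unjustified assertion $\bar\nu_{v,\s}=\bar\nu_{x,\s}$: for $v=ux$ with $u\in W_J$, $x\in{}^J\tW^{\s(J)}$, $x\s(J)=J$, the elements $v$ and $x$ are not visibly $\s$-conjugate in $\tW$, and $\bar\nu_{\,\cdot\,,\s}$ is a priori only an invariant of $\s$-conjugacy. The claim is nonetheless true and provable quickly: since $\text{Ad}(x)\circ\s$ preserves $W_J$, one has $(v\s)^n=c_n\,(x\s)^n$ with $c_n\in W_J$; for $n$ sufficiently divisible both $(v\s)^n$ and $(x\s)^n$ are translations, whence $c_n$ is a translation lying in the finite group $W_J$, so $c_n=1$ and $\nu_{v,\s}=\nu_{x,\s}$. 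With this supplied, your length bookkeeping ($\ell(v)=\ell(u)+\ell(x)$ and $\ell(v)=\<\bar\nu_{v,\s},2\rho\>=\<\bar\nu_{x,\s},2\rho\>=\ell(x)$) correctly forces $\ell(u)=0$, hence $u=1$ and $w\approx_\s x$.

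Step 2 is where the substance of the theorem lies, and you do not actually provide an argument for it. You correctly isolate the two obstacles --- aligning the subsets $J$ and $J'$ by an $\Omega$-twist, and then constructing an explicit $\approx_\s$-path between the aligned straight elements --- but for both you offer only expectations (``I expect to handle,'' ``should then furnish''). Neither is a routine verification: the claim that all subsets $J$ arising from Theorem~\ref{ux} for a fixed straight class lie in a single $\Omega$-orbit is itself a nontrivial rigidity statement in need of proof, and the path construction under the twisted action $\text{Ad}(x)\circ\s$ is exactly the hard combinatorial core of the result. As written, the proposal is a useful reduction of the problem, but it is not a proof; you would need to carry out both parts of step 2 concretely, or consult the inductive argument in \cite{HN} directly, to close it.
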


\subsection{}\label{g-stable} For $g, g' \in G(L)$, we write $g \cdot_\s g'=g g' \s(g) \i$. The subset $G(L) \cdot_\s I w I$ is studied in \cite[\S 3]{He99}. Now we recollect some results that will be used here. 

(1) If $w \tilde \approx_\s w'$, then $G(L) \cdot_\s I w I=G(L) \cdot_\s I w' I$. 

(2) If $w \in W$ and $s \in \tSS$ such that $s w \s(s)<w$, then $G(L) \cdot_\s I w I=G(L) \cdot_\s I s w I \cup G(L) \cdot_\s I s w \s(s) I$. 

(3) If $w \in W$ is a minimal length element in its $\s$-conjugacy class, then $G(L) \cdot_\s I w I$ is a single $\s$-conjugacy class in $G(L)$. 

(4) Let $J \subset \tSS$ with $W_J$ finite, and $x \in \tW$ with $x \in {}^J \tW^{\s(J)}$ and $x \s(J)=J$. Then for any $u \in W_J$, $G(L) \cdot_\s I u x I=G(L) \cdot_\s I x I$.

\subsection{} Let $w \in \tW$ and $\co$ be a straight $\s$-conjugacy class in $\tW$. We write $\co \preceq_\s w$ if there exists a minimal length element $w' \in \co$ such that $w' \le w$ in the usual Bruhat order. 

Now we discuss some properties on $\preceq_\s$. 

\begin{prop}\label{preceq}
Let $w, w' \in \tW$ with $w \to_\s w'$. Let $\co$ be a straight $\s$-conjugacy class of $\tW$. If $\co \preceq_\s w'$, then $\co \preceq_\s w$. 
\end{prop}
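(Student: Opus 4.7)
The plan is to unwind the definition of $\to_\s$ into its elementary steps and prove the claim one step at a time, so without loss of generality I assume $w \xrightarrow{s}_\s w'$ for a single $s \in \tSS$; that is, $w' = s w \s(s)$ and $\ell(w') \le \ell(w)$. Write $s' = \s(s)$. If $\ell(w') < \ell(w)$, then the standard description of length change under twisted conjugation by a simple reflection forces $sw < w$ and $sw \s(s) < sw$, hence $w' < sw < w$ in the Bruhat order. Setting $w_1 := w_1'$ then gives $w_1 \le w' \le w$, and $w_1$ is still a minimal length element of $\co$, which handles this case.

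The substantive case is $\ell(w) = \ell(w')$. Here there are exactly two configurations: either (a) $sw > w$ and $ws' < w$, or (b) $sw < w$ and $ws' > w$; they are symmetric under exchanging $(w, s) \leftrightarrow (w', s')$, so I will focus on (a). In this configuration a direct check gives $sw' = ws' < w'$ (so $s$ is a left descent of $w'$) and $ws' < w$ (so $s'$ is a right descent of $w$). The key tool is the lifting property of the Bruhat order, applied first on the left and then on the right. The left lifting applied to $w_1' \le w'$ and the descent $sw' < w'$ yields $\min(w_1', s w_1') \le sw' = ws'$. If $w_1' < s w_1'$, then $w_1' \le ws' \le w$ and I take $w_1 := w_1'$; if instead $s w_1' < w_1'$, then $s w_1' \le ws'$, and the right lifting applied to $ws' < w$ gives $\min(s w_1' s',\, s w_1') \le ws'$, hence $s w_1' s' \le w$, and I take $w_1 := s w_1' \s(s)$.

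The final step is to verify that this candidate $w_1$ is a minimal length element of $\co$. It lies in $\co$ because it is visibly $\s$-conjugate to $w_1'$. For the length, the subcase assumption $s w_1' < w_1'$ excludes the possibility $\ell(s w_1' s') = \ell(w_1') + 2$, so $\ell(s w_1' s') \le \ell(w_1')$; combined with the minimality of $\ell(w_1')$ among elements of $\co$, this forces $\ell(w_1) = \ell(w_1')$, so $w_1 \in \co_{\min}$ as required. The main obstacle is simply the careful bookkeeping of left and right descents when chaining the two versions of the lifting lemma, together with checking that in the relevant subcase the descent condition $s w_1' < w_1'$ is strong enough to guarantee length preservation; no deeper ingredient is needed.
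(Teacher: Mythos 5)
Your proof is correct and follows essentially the same route as the paper: reduce to a single elementary step $w' = sw\s(s)$, dispose of the length-decreasing case at once, and in the length-preserving case fix a minimal-length witness for $\co \preceq_\s w'$ and chain the Bruhat-order lifting property on the left and then on the right, checking at the end that the produced element is again $\s$-conjugate to the witness and of the same (hence minimal) length. The paper phrases the case split in terms of $sx$ versus $x$ and cites Lusztig's Corollary 2.5 for the lifting step, but the underlying mechanism is identical.
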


\begin{rem}
The proof is similar to the finite case \cite[Lemma 4.4]{HeMin}. We include the proof here for completeness. 
\end{rem}

\begin{proof}
It suffices to prove the case where $w'=s w \s(s)$ for some $s \in \tSS$. Let $x \in \co_{\min}$ with $x \le w'$. 

If $w>w'$, then $x<w$ and hence $\co \preceq_\s w$. Now we assume that $\ell(w)=\ell(w')$. Without loss of generalization, we may assume that $s w<w$ and $w \s(s)>w$. 

If $s x<x$, then $\ell(s x \s(s)) \le \ell(x)$. Since $x \in \co_{\min}$, $s x \s(s) \in \co_{\min}$. By \cite[Corollary 2.5]{Lu-hecke}, $s x \le s w$ and $s x \s(s) \le s w \s(s)$. Hence $\co \preceq_\s w$. 

If $s x>x$, then \cite[Corollary 2.5]{Lu-hecke}, $x \le s w$ and hence $x<s w \s(s)$. We also have that $\co \preceq_\s w$. 
\end{proof}

\begin{cor}\label{min-min}
Let $\co$ be a straight $\s$-conjugacy class of $\tW$ and $w \in \co$. Then $v$ is of minimal length in $\co$ if and only if $v$ is a minimal element in $\co$ with respect to the Bruhat order. 
\end{cor}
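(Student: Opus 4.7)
The plan is to establish first an auxiliary statement, namely that $\co \preceq_\s w$ holds for every $w \in \co$, i.e.\ every element of $\co$ dominates (in the Bruhat order) some minimal length element of $\co$. Both directions of the corollary will then follow by a short chase.

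To prove the auxiliary statement, I would fix $w \in \co$ and apply Theorem \ref{ux}(1) to produce $w' \in \co_{\min}$ with $w \to_\s w'$. Since $w'$ lies in $\co_{\min}$ and $w' \le w'$ trivially, the definition of $\preceq_\s$ gives $\co \preceq_\s w'$. Expanding $w \to_\s w'$ as a chain of single steps $w = w_0 \xrightarrow{s_1}_\s w_1 \xrightarrow{s_2}_\s \cdots \xrightarrow{s_n}_\s w_n = w'$, I would then iterate Proposition \ref{preceq} backwards along this chain to conclude $\co \preceq_\s w$.

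For the direction $(\Leftarrow)$, suppose $v$ is minimal in $\co$ for the Bruhat order. The auxiliary statement produces $x \in \co_{\min}$ with $x \le v$; since $v$ is Bruhat-minimal we must have $x = v$, so $v \in \co_{\min}$. For the direction $(\Rightarrow)$, suppose $v \in \co_{\min}$ and take any $x \in \co$ with $x \le v$; applying the auxiliary statement to $x$ yields $x' \in \co_{\min}$ with $x' \le x \le v$. Since $\ell(x') = \ell(v)$, the relation $x' \le v$ forces $x' = v$, and therefore $x = v$, which proves that $v$ is Bruhat-minimal.

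I do not foresee any serious obstacle: the essential inputs are Proposition \ref{preceq} (already proved in this section) and Theorem \ref{ux} (from \cite{HN}); everything else is a formal consequence of the definitions of $\to_\s$ and $\preceq_\s$ and of the trivial fact that among elements of equal length, the Bruhat order is discrete.
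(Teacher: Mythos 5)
Your proof is correct and follows the approach the paper clearly intends: the auxiliary fact that $\co \preceq_\s w$ for every $w \in \co$ is obtained from Theorem \ref{ux}(1) plus Proposition \ref{preceq} (and note Proposition \ref{preceq} is already stated for the multi-step relation $\to_\s$, so the expansion into single steps, while fine, is not strictly necessary), and both directions of the corollary then follow by the elementary length argument you give.
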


\begin{cor}\label{preceq'}
Let $\co$ be a straight $\s$-conjugacy class of $\tW$ and $w \in \tW$. Then $\co \preceq_\s w$ if and only if there exists $x \in \co$ with $x \le w$. 
\end{cor}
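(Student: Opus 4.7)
The plan is to observe that one direction is immediate from the definition, and to handle the converse by Bruhat descent followed by an application of Corollary \ref{min-min}. For the ``only if'' direction, if $\co \preceq_\s w$ then by definition there is an element $x \in \co_{\min} \subset \co$ with $x \le w$, and this $x$ is the desired witness.

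For the converse, suppose $x \in \co$ satisfies $x \le w$. I would pass to the set $S = \{z \in \co : z \le x\}$. This set is nonempty (it contains $x$) and finite, because Bruhat order ideals in $\tW = W_a \rtimes \Omega$ are finite (each element $\le x$ lies in the same $\Omega$-coset as $x$, and order ideals in the affine Coxeter group $W_a$ are finite). Let $y$ be a Bruhat-minimal element of $S$. I claim that $y$ is in fact a Bruhat-minimal element of all of $\co$: any $z \in \co$ with $z < y$ would satisfy $z < y \le x$, so $z \in S$, contradicting the minimality of $y$ within $S$. Since $\co$ is a straight $\s$-conjugacy class, Corollary \ref{min-min} then identifies $y$ as an element of minimal length in $\co$. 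Transitivity of the Bruhat order gives $y \le x \le w$, so $\co \preceq_\s w$, completing the proof.

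The substantive input is Corollary \ref{min-min}, which equates length-minimality with Bruhat-minimality inside a straight $\s$-conjugacy class; without this bridge, the order-theoretic descent would only produce a Bruhat-minimal element of $\co$, a priori unrelated to length. Everything else is a formal descent argument in a well-founded poset, so there is no substantive obstacle once Corollary \ref{min-min} is in hand.
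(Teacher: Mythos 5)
Your proof is correct. The ``only if'' direction is indeed immediate from the definition, and your descent argument for the converse is sound: a Bruhat-minimal element of $S=\{z\in\co: z\le x\}$ is automatically Bruhat-minimal in all of $\co$ (since any $z\in\co$ with $z<y\le x$ would lie in $S$), and Corollary~\ref{min-min} then upgrades Bruhat-minimality to length-minimality. The finiteness of $S$ is also correctly justified, though well-foundedness of the Bruhat order would already suffice.

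The paper leaves this corollary without an explicit proof, and the route it most directly sets up is slightly different from yours: take $x\in\co$ with $x\le w$, apply Theorem~\ref{ux}(1) to produce a chain $x\to_\s\cdots\to_\s x'$ with $x'\in\co_{\min}$, note $\co\preceq_\s x'$ trivially, and pull back along the chain using Proposition~\ref{preceq} to get $\co\preceq_\s x$, hence $\co\preceq_\s w$. Your version instead descends in the Bruhat order within $\co$ and invokes Corollary~\ref{min-min}; since \ref{min-min} is itself a consequence of \ref{ux}+\ref{preceq}, the two arguments have the same combinatorial content, with yours packaging the $\to_\s$-descent inside the cited corollary rather than running it explicitly. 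Both are clean; yours is arguably the more purely order-theoretic formulation.
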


\subsection{}\label{proof-w-closure} Now we prove Theorem \ref{w-closure}. 

By definition, $\cup_{\co \preceq_\s w} [\co] \subset \cup_{w' \le w} \Psi(w') \subset \cup_{w' \le w} G(L) \cdot I w' I$. 

Now we show that $\cup_{w' \le w} G(L) \cdot I w' I \subset \cup_{\co \preceq_\s w} [\co]$. By induction, it suffices to show that $$G(L) \cdot I w I \subset \cup_{\co \preceq_\s w} [\co].$$ We argue by induction on $\ell(w)$. 

If $w$ is of minimal length in its $\s$-conjugacy class, then by Theorem \ref{ux} (2), then there exists $J \subset \tSS$ with $W_J$ finite, $x \in \tW$ be an $\s$-straight element with $x \in {}^J \tW^{\s(J)}$ and $x \s(J)=J$, and $u \in W_J$ such that $w \approx_\s ux$. Let $\co_x$ be the $\s$-conjugacy class of $x$. Then $\co_x \preceq_\s u x$. By Proposition \ref{preceq}, $\co_x \preceq_\s w$. By \S \ref{g-stable} (1), (3) \& (4), $$G(L) \cdot_\s I w I=G(L) \cdot_\s I u x I=G(L) \cdot_\s I x I=[\co_x] \subset \cup_{\co \preceq_\s w} [\co].$$

If $w$ is not of minimal length in its $\s$-conjugacy class, then by Theorem \ref{ux} (1), there exists $w' \in \tW$ with $w \approx_\s w'$ and $s \in \tSS$ with $s w' \s(s)<w'$. By \S \ref{g-stable} (1) \& (2), $$G(L) \cdot_\s I w I=G(L) \cdot_\s I w' I=G(L) \cdot_\s I s w' I \cup G(L) \cdot_\s I s w' \s(s) I.$$

By induction hypothesis on $s w'$ and $s w' \s(s)$, $$G(L) \cdot_\s I w I \subset \cup_{\co \preceq_\s s w' \text{ or } \co \preceq_\s s w' \s(s)} [\co] \subset \cup_{\co \preceq_\s w'} [\co].$$ 

By Proposition \ref{preceq}, $\co \preceq_\s w'$ if and only if $\co \preceq_\s w$. Hence $G(L) \cdot_\s I w I \subset \cup_{\co \preceq_\s w} [\co]$. The statement is proved. 

\begin{cor}\label{2.8}
Let $w \in \tW$. Then $\cup_{w' \le w} \Psi(w')=\cup_{\co \preceq_\s w} [\co]$. 
\end{cor}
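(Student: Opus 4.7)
The plan is to observe that Corollary \ref{2.8} is essentially read off from the chain of inclusions already established in the proof of Theorem \ref{w-closure} in \S\ref{proof-w-closure}. That proof shows
$$\bigcup_{\co \preceq_\s w}[\co] \;\subset\; \bigcup_{w' \le w}\Psi(w') \;\subset\; \bigcup_{w' \le w} G(L)\cdot_\s Iw'I \;\subset\; \bigcup_{\co \preceq_\s w}[\co],$$
which forces equality throughout. So all that remains is to point out each of the three inclusions explicitly.

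For the first inclusion, suppose $\co$ is a straight $\s$-conjugacy class with $\co \preceq_\s w$. By definition of $\preceq_\s$, there is a minimal length element $v \in \co$ with $v \le w$. Since $v$ represents $\co \in B(\tW,\s)$, the commutative diagram of \S\ref{tri} gives $[\co]=\Psi(v)$, hence $[\co] \subset \bigcup_{w' \le w}\Psi(w')$. For the second inclusion, any representative of $w'$ in $N(L)$ lies in $Iw'I$, so the single $\s$-conjugacy class $\Psi(w')$ in $G(L)$ is contained in $G(L)\cdot_\s Iw'I$. The third inclusion is exactly the induction on $\ell(w)$ (based on Theorem \ref{ux} and the four properties listed in \S\ref{g-stable}) that occupies the proof of Theorem \ref{w-closure}.

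Combining these three inclusions, equality holds everywhere, and in particular $\bigcup_{w' \le w}\Psi(w')=\bigcup_{\co \preceq_\s w}[\co]$. Since the substantive work was already carried out in Theorem \ref{w-closure}, there is no real obstacle here; the corollary is simply a repackaging of what has been proved.
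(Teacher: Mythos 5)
Your proposal is correct and matches the paper's implicit reasoning: Corollary~\ref{2.8} has no separate proof in the paper precisely because the chain of inclusions $\cup_{\co \preceq_\s w}[\co] \subset \cup_{w' \le w}\Psi(w') \subset \cup_{w' \le w} G(L)\cdot_\s Iw'I \subset \cup_{\co \preceq_\s w}[\co]$ established in \S\ref{proof-w-closure} forces equality throughout. You have simply spelled out each inclusion explicitly, which is exactly what the paper leaves to the reader.
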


The following special case of Theorem \ref{w-closure} is useful in this paper. 

\begin{cor}\label{mu-closure}
Let $b \in G(L)$, $\mu \in X_*(T)^+$ and $J \subset \tSS$ such that $\s(J)=J$ and $W_J$ is finite. Then $X(\mu, b)_J \neq \emptyset$ if and only if $[b] \in \Psi(\Adm^J(\mu))$.
\end{cor}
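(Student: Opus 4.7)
The plan is to chain the definitions together with Theorem \ref{w-closure}. Unfolding the description of $Y_{J,\mu}$ from \S\ref{1.5}, namely $Y_{J,\mu}=\bigcup_{w\in\Adm^J(\mu)}IwI$, gives
\[
X(\mu,b)_J\neq\emptyset \iff [b]\cap Y_{J,\mu}\neq\emptyset \iff [b]\in\bigcup_{w\in\Adm^J(\mu)}G(L)\cdot_\s IwI.
\]
The implication $[b]\in\Psi(\Adm^J(\mu))\Rightarrow X(\mu,b)_J\neq\emptyset$ is then immediate, since any lift to $N(L)$ of an element $w\in\Adm^J(\mu)$ with $\Psi(w)=[b]$ lies in $[b]\cap IwI\subseteq[b]\cap Y_{J,\mu}$.

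For the converse direction, suppose $[b]\in G(L)\cdot_\s IwI$ for some $w\in\Adm^J(\mu)$. Then $X_{\emptyset,w}(b)\neq\emptyset$, hence a fortiori $\bigcup_{w'\le w}X_{\emptyset,w'}(b)\neq\emptyset$, and Theorem \ref{w-closure} yields
\[
[b]\in\bigcup_{w'\le w}\Psi(w').
\]
Thus $[b]=\Psi(w')$ for some $w'\in\tW$ with $w'\le w$, and the task reduces to checking that $w'\in\Adm^J(\mu)$.

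The concluding step invokes the standard fact that $\Adm^J(\mu)$ is downward closed under the Bruhat order on $\tW$: the image $\Adm_J(\mu)\subseteq W_J\backslash\tW/W_J$ is downward closed in the induced double-coset Bruhat order, as established in \cite{KR}, and given $w'\le w$ with $w\in\Adm^J(\mu)$ the inequality on double cosets $W_Jw'W_J\le W_JwW_J$ is witnessed by $w'\le w$ themselves, so $W_Jw'W_J\in\Adm_J(\mu)$ and $w'\in\Adm^J(\mu)$. Apart from the invocation of Theorem \ref{w-closure}, this downward closure property is the only non-formal input, and it is the step I would expect to be the main technical obstacle were it not already well known in the literature on the admissible set.
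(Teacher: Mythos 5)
Your argument is correct and runs along the same lines as the paper's own proof: both unfold $X(\mu,b)_J\neq\emptyset$ to $[b]$ meeting $\bigcup_{w\in\Adm^J(\mu)}G(L)\cdot_\s IwI$, then invoke Theorem~\ref{w-closure} together with the fact that $\Adm^J(\mu)$ is downward closed under Bruhat order (the paper phrases this as $\Adm^J(\mu)$ being a finite union of intervals $\{w:w\le x_i\}$, which is the same content, since $W_J\{w:w\le v\}W_J=\{w:w\le\max(W_JvW_J)\}$). Your version spells out the downward-closure step a bit more explicitly, but the ingredients and the logical structure coincide.
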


\begin{proof}
By definition, $X(\mu, b)_J \neq \emptyset$ if and only if $$[b] \subset \cup_{w \in \Adm^J(\mu)} G(L) \cdot_\s I w I.$$

Notice that $\Adm^J(\mu)$ is of the form $\cup_i \{w \in \tW; w \le x_i\}$ for finitely many $x_i$'s. The statement follows from Theorem \ref{w-closure}.
\end{proof}

\section{Three partial orders}

\subsection{} In this section, we assume that $F=\FF_q((\e))$. Recall the commutative diagram in \S\ref{tri} (a):
\[\xymatrix{B(\tW, \s)_{str} \ar[rr]^{\Psi} \ar[dr]_f & & B(G) \ar[ld]^-f \\ & X_*(T)_\QQ^+ \times \pi_1(G)_{\G_F} &,}\]

We will introduce partial orders on these sets and show that these partial orders are compatible. 

\subsection{}\label{5.4} Let $\co, \co' \in B(\tW, \s)_{str}$. We write $\co' \preceq_\s \co$ if for some $w \in \co_{\min}$, $\co' \preceq_\s w$. By Theorem \ref{min-str} and Proposition \ref{preceq}, if $\co' \preceq_\s \co$, then $\co' \preceq_\s x$ for any $x \in \co_{\min}$. Hence $\preceq_\s$ is a partial order on $B(\tW, \s)_{str}$. 

For $(v_1, \k_1), (v_2, \k_2) \in X_*(T)_\QQ^+ \times \pi_1(G)_{\G_F}$, we write $(v_1, k_1) \le (v_2, k_2)$ if $v_1 \le v_2$ (the dominance order) and $k_1=k_2$. 

Following Grothendieck, we introduce admissible subscheme of $G(L)$ and show that each $\s$-conjugacy class of $G(L)$ is a locally closed admissible subscheme of $G(L)$ (see Appendix). The closure relation between the $\s$-conjugacy classes of $G(L)$ gives a partial order on $B(G)$. 

The main result of this section is

\begin{thm}\label{tri-order} Let $\co, \co' \in B(\tW, \s)_{str}$. The following conditions are equivalent:

(1) $\co \preceq_\s \co'$. 

(2) $[\co] \subset \overline{[\co']}$. 

(3) $f(\co) \le f(\co')$, i.e. $\k(\co)=\k(\co')$ and $\nu_\co \le \nu_{\co'}$. 
\end{thm}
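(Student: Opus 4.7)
The plan is to prove the cyclic implications $(1) \Rightarrow (2) \Rightarrow (3) \Rightarrow (1)$. The first two are direct; the heart of the theorem is $(3) \Rightarrow (1)$, which will combine Viehmann's Grothendieck conjecture for split groups \cite{Vi1} with the combinatorial machinery of straight elements from \cite{HN} and the reduction method of \cite{He99}.

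For $(1) \Rightarrow (2)$, pick any $w \in \co'_{\min}$. Since $\co'$ is straight, $w$ is $\s$-straight, and by \S\ref{g-stable}(3) we have $G(L) \cdot_\s IwI = [\co']$. The closure $\overline{[\co']}$ is closed and stable under $\s$-conjugation, hence contains $\overline{IwI} = \bigcup_{w'' \le w} Iw''I$ and thus $\bigcup_{w'' \le w} G(L) \cdot_\s Iw''I$, which by Corollary \ref{2.8} equals $\bigcup_{\co_0 \preceq_\s w} [\co_0]$. The hypothesis $\co \preceq_\s \co'$ is precisely $\co \preceq_\s w$, so $[\co] \subset \overline{[\co']}$.

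For $(2) \Rightarrow (3)$, apply the upper semicontinuity of the Newton map on $G(L)$ together with local constancy of the Kottwitz map, both available in the admissible-subscheme framework of the Appendix (following Rapoport-Richartz). The inclusion $[\co] \subset \overline{[\co']}$ immediately yields $\k(\co) = \k(\co')$ and $\nu_\co \le \nu_{\co'}$ in the dominance order.

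For $(3) \Rightarrow (1)$ -- the main obstacle -- assume $f(\co) \le f(\co')$ and fix $w' \in \co'_{\min}$; the goal is $\co \preceq_\s w'$. By Theorem \ref{w-closure} together with Corollary \ref{2.8}, this is equivalent to $[\co] \cap \overline{Iw'I} \ne \emptyset$. In the split case, Viehmann's theorem provides $[\co] \subset \overline{[\co']} = \overline{G(L) \cdot_\s Iw'I}$, and one must upgrade this topological inclusion to the Bruhat statement, i.e.\ produce a representative of $[\co]$ actually sitting in $\overline{Iw'I}$. This is done by running the reduction method of \cite[\S 3]{He99}: one starts from a nearby point of $[\co']$ inside $Iw'I$, iteratively applies the moves $\to_\s$ and $\tilde \approx_\s$ of Theorems \ref{ux} and \ref{min-str}, and tracks, as in the proof of Theorem \ref{w-closure}, that the Bruhat position of the resulting $\s$-conjugacy can only weaken in $\overline{Iw'I}$; upon reaching a minimal-length representative one lands in a straight class whose $f$-invariants force it to be $\co$. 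For general tamely ramified $G$, one passes to the tame Galois extension over which $G$ splits, applies the split-case argument inside the enlarged Iwahori-Weyl group, and descends back to $\tW$. The principal technical subtlety is showing that this descent is compatible with both the Bruhat order on $\tW$ and the relation $\preceq_\s$ on straight classes; the combinatorial rigidity supplied by Theorem \ref{min-str} -- any two minimal-length elements of a straight class are linked by $\tilde \approx_\s$ -- is precisely what makes the descent work.
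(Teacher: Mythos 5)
Your cyclic scheme $(1)\Rightarrow(2)\Rightarrow(3)\Rightarrow(1)$ conceals a circularity at the crucial step. Inside $(3)\Rightarrow(1)$ you reach $[\co]\subset\overline{[\co']}=\overline{G(L)\cdot_\s Iw'I}$ and then must ``upgrade'' to $[\co]\cap\overline{Iw'I}\ne\emptyset$; but that upgrade is precisely the implication $(2)\Rightarrow(1)$, which your arrangement has not yet established. The vague description of ``running the reduction method, iteratively applying the moves $\to_\s$ and $\tilde\approx_\s$, tracking the Bruhat position'' does not correctly identify the tool: what does the work is Theorem~\ref{w-closure'}, the admissibility result $\overline{G(L)\cdot_\s Iw'I}=\bigcup_{w''\le w'}G(L)\cdot_\s Iw''I$, from which the conclusion is immediate since the right side is a finite disjoint union of $\s$-conjugacy classes. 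The paper avoids the circularity by proving $(1)\Leftrightarrow(2)$ first as a complete standalone equivalence (exactly via Theorem~\ref{w-closure'} together with Corollary~\ref{2.8}), and only then feeding $(2)\Rightarrow(1)$ for the auxiliary group into the proof of $(3)\Rightarrow(1)$. Separately, your $(2)\Rightarrow(3)$ invokes upper semicontinuity of the Newton map and local constancy of the Kottwitz map in the admissible framework; for general tamely ramified loop groups that itself requires proof, and the paper sidesteps it entirely by proving $(1)\Rightarrow(3)$ by elementary Coxeter combinatorics: from straight elements $w\le w'$ one gets $(w\s)^n\le(w'\s)^n$, hence $\bar\nu_{w,\s}\le\bar\nu_{w',\s}$, while $\k(\co)=\k(\co')$ is automatic from $wW_a=w'W_a$.

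Your reduction of the nonsplit case by passing to the tame splitting extension is also not what the paper does and is genuinely problematic. Over the extension the maximal split torus grows, so the Iwahori--Weyl group itself changes (absolute versus relative affine root data); descending Bruhat-order and $\preceq_\s$ statements from the larger group back to $\tW$ is a real ramified-descent problem that Theorem~\ref{min-str} alone does not solve, and you leave it unaddressed. The paper instead keeps the base field fixed, passes to the adjoint group $G_{ad}$, factors $\s=\mathrm{Ad}(\t)\circ\s_0$ with $\t\in\Omega$ and $\s_0$ a diagram automorphism, twists $\co_{ad}$ and $\co'_{ad}$ by $\t$ to turn them into straight $\s_0$-conjugacy classes, and realizes these inside the \emph{quasi-split unramified} group $H$ attached to $(\tW_{ad},\s_0)$. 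Since $H$ has the same Iwahori--Weyl group as $G_{ad}$, the results of Viehmann and Hamacher apply directly and the descent back to $\tW$ is just a lift along the length-preserving projection $\pi:\tW\to\tW_{ad}$ — no root-system change occurs at any stage.
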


\begin{proof}
We first prove $(1) \Leftrightarrow (2)$. 

Let $w'$ be a $\s$-straight element of $\co'$. Then 
\begin{align*}
\overline{[\co']} &=\overline{G(L) \cdot_\s I w' I}=\cup_{w \le w'} G(L) \cdot_\s I w I \\ &=\cup_{\co_1 \preceq_\s w'} [\co_1]=\cup_{\co_1 \preceq \co'} [\co_1].
\end{align*}
Here the first equality follows from \S\ref{g-stable} (3), the second equality follows from Theorem \ref{w-closure'}, the third equality follows from Theorem \ref{w-closure} and Corollary \ref{2.8} and the last equality follows from \S\ref{5.4}. 

Next we prove $(1) \Rightarrow (3)$. 

If $\co \preceq_\s \co'$, then there exists $w \in \co_{\min}$ and $w' \in \co'_{\min}$ such that $w \le w'$. In particular, $w W_a=w' W_a$. Hence $\k(\co) =\k(\co')$. Moreover, $w$ and $w'$ are $\s$-straight elements. So for any $n$, $\ell((w \s)^n)=n \ell(w)$ and $\ell((w' \s)^n)=n \ell(w')$. Thus $(w \s)^n \le (w' \s)^n$. In particular, $t^{m \nu_{w, \s}} \le t^{m \nu_{w', \s}}$ for sufficiently divisible integer $m$. In particular, $m \bar \nu_{w, \s} \le m \bar \nu_{w', \s}$. So $\bar \nu_{w, \s} \le \bar \nu_{w', \s}$.

Now we prove $(3) \Rightarrow (1)$. 

Suppose that $\k(\co) =\k(\co')$ and $\nu_{\co} \le \nu_{\co'}$. Let $\tW_{ad}$ be the Iwahori-Weyl group of the adjoint group $G_{ad}$. The natural projection $\pi: \tW \to \tW_{ad}$ send $\co$ to $\co_{ad}$ and $\co'$ to $\co'_{ad}$. As $\pi$ preserves length, $\co_{ad}$ and $\co'_{ad}$ are straight $\s$-conjugacy classes of $\tW_{ad}$. Moreover, $\k(\co_{ad}) =\k(\co'_{ad})$ and $\nu_{\co_{ad}} \le \nu_{\co'_{ad}}$.

We may write $\s$ as $\s=\text{Ad} (\t) \circ \s_0$, where $\t$ is a length-zero element in $\tW_{ad}$ and $\s_0$ is a diagram automorphism of $\tW_{ad}$ such that $\s_0$ fixes $\tSS-\SS$. Then $\co_{ad} \t$ and $\co'_{ad} \t$ are straight $\s_0$-conjugacy classes of $\tW_{ad}$. Moreover, $\nu_{\co_{ad} \t} \le \nu_{\co'_{ad} \t}$.

We associate a quasi-split unramified group $H$ to the pair $(\tW_{ad}, \s_0)$. We regard $[\co_{ad} \t]$ and $[\co'_{ad} \t]$ as $\s_0$-conjugacy classes of $H(L)$. By \cite[Theorem 2]{Vi1} and \cite[Theorem 1.1]{Ha}\footnote{The statement in \cite{Ha} is for PEL type Shimura varieties. The argument still holds for any unramified loop groups over function fields. It is based on Viehmann's strategy in \cite[Proof of Theorem 20]{Vi1} (see also \cite[Proposition 5.13]{Ha}, using the dimension formula of affine Deligne-Lusztig varieties \cite{Ha1} and the purity Theorem \cite[Corollary 18]{Vi1} and \cite[Proposition 5.4]{Ha}.}, $[\co_{ad} \t] \subset \overline{[\co'_{ad} \t]}$. By the equivalence $(1) \Leftrightarrow (2)$ for $G_{ad}$, $\co_{ad} \t \preceq_{\s_0} \co'_{ad} \t$. This is equivalence to $\co_{ad} \preceq_\s \co'_{ad}$. 

By definition, there exists $w_{ad} \in (\co_{ad})_{\min}$ and $w'_{ad} \in (\co'_{ad})_{\min}$ such that $w_{ad} \le w'_{ad}$. Let $w \in \co$ and $w' \in \co'$ such that $\pi(w)=w_{ad}$, $\pi(w')=w'_{ad}$ and $w W_a=w' W_a$. Then $w \le w'$. Hence $\co \preceq_\s \co'$. 
\end{proof}

\section{Converse to Mazur's inequality}

\begin{prop}\label{5.1}
Let $\mu \in X_*(T)^+$ and $\co$ be a straight $\s$-conjugacy class of $\tW$. If $\k(\co)=\mu^\sharp$ and $\nu_\co \le \mu^\diamond$, then $\co \cap \Adm(\mu) \neq \emptyset$. 
\end{prop}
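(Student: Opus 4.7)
The plan is to derive this from Theorem \ref{tri-order} (Theorem B) together with the fact, emphasized in the introduction, that the maximal element of $B(G,\mu)$ admits a representative in $\Adm(\mu)$. The rough shape: produce a distinguished straight class $\co_\mu$ lying above $\co$ in the order $\preceq_\s$, find a $\s$-straight element $w_\mu$ of $\co_\mu$ that sits inside $\Adm(\mu)$, and then exploit the downward-closedness of $\Adm(\mu)$ under the Bruhat order.

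First, I would introduce the straight $\s$-conjugacy class $\co_\mu \in B(\tW, \s)_{str}$ characterized by $f(\co_\mu) = (\mu^\diamond, \mu^\sharp)$, using the bijection $\Psi: B(\tW, \s)_{str} \isomarrow B(G)$ restricted to the top element of $B(G, \mu)$. The hypotheses on $\co$ say exactly that $\k(\co) = \k(\co_\mu)$ and $\nu_\co \le \nu_{\co_\mu}$, so Theorem \ref{tri-order} gives $\co \preceq_\s \co_\mu$.

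Next I would exhibit a $\s$-straight (hence, by Corollary \ref{min-min}, minimal length) element $w_\mu \in \co_\mu \cap \Adm(\mu)$. In the quasi-split case this is immediate: the translation $t^{\underline\mu}$ is $\s$-straight (since $\ell(t^{\underline\mu}) = \<\mu, 2\rho\> = \<\mu^\diamond, 2\rho\>$ because $\s_0$ permutes positive roots and averages $\mu$ to $\mu^\diamond$), its Newton point is $\mu^\diamond$, and it lies in $\Adm(\mu)$ trivially with $x = 1$. For general tamely ramified $G$, one invokes the result of \cite{HN-adm} cited in the introduction.

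Finally, by the remarks in \S\ref{5.4} (combining Theorem \ref{min-str} with Proposition \ref{preceq}), the relation $\co \preceq_\s \co_\mu$ implies $\co \preceq_\s w_\mu$, and then Corollary \ref{preceq'} produces an element $x \in \co$ with $x \le w_\mu$ in the Bruhat order. Since
\[
\Adm(\mu) = \bigcup_{y \in W_0} \{w \in \tW : w \le t^{y(\underline\mu)}\}
\]
is a union of Bruhat intervals and hence closed downward, $w_\mu \in \Adm(\mu)$ forces $x \in \Adm(\mu)$. Thus $x \in \co \cap \Adm(\mu)$ and the proposition follows. The only nontrivial ingredient, and the main obstacle outside the quasi-split case, is the construction of $w_\mu$, for which we rely on \cite{HN-adm}; the rest is essentially an application of the machinery already assembled.
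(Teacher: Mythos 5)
Your proposal follows essentially the same route as the paper: invoke \cite{HN-adm} to produce a distinguished straight class whose Newton point is realized inside $\Adm(\mu)$, apply Theorem \ref{tri-order} to place $\co$ below it in $\preceq_\s$, and then conclude using Corollary \ref{preceq'} and the fact that $\Adm(\mu)$ is downward closed in the Bruhat order. This last chain of reductions is exactly what the paper does.

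One imprecision, however: you characterize $\co_\mu$ by $f(\co_\mu) = (\mu^\diamond, \mu^\sharp)$. This is only correct when $G$ is quasi-split. In general, $\mu^\diamond$ need not be the Newton point of any $\s$-conjugacy class; the actual content of \cite{HN-adm} (and the way the paper uses it) is that the set $\{\nu_{\co'} : \k(\co')=\mu^\sharp,\ \nu_{\co'} \le \mu^\diamond\}$ has a unique maximal element $\nu$, possibly strictly smaller than $\mu^\diamond$, and that there is an element $x \in \Adm(\mu)$ with $\bar\nu_x = \nu$. So the class you want is the straight class $\co_\mu$ with $\nu_{\co_\mu} = \nu$ and $\k(\co_\mu) = \mu^\sharp$, not one with Newton point $\mu^\diamond$. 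You do defer to \cite{HN-adm} for the non-quasi-split case, so your intent is clear and the argument is salvaged, but as literally stated the characterization $f(\co_\mu) = (\mu^\diamond, \mu^\sharp)$ is false outside the quasi-split setting. With that correction the proposal agrees with the paper's proof.
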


\begin{proof}
By \cite{HN-adm}, the set $\{\nu_\co; \k(\co)=\mu^\sharp, \nu_\co \le \mu^\diamond\}$ contains a unique maximal element $\nu$ and there exists $x \in \Adm(\mu)$ with $\bar \nu_x=\nu$. 

Let $\co$ be a straight $\s$-conjugacy class $\co$ of $\tW$ with $\k(\co)=\mu^\sharp$ and $\nu_\co \le \mu^\diamond$. Then $\nu_\co \le \nu$. By Theorem \ref{tri-order} and Corollary \ref{min-min}, $\nu_\co \le \mu^\diamond$, $\co \preceq_\s x$. In other words, there exists $w \in \co_{\min}$ such that $w \le x$. Since $\Adm(\mu)$ is closed under the Bruhat order, $w \in \Adm(\mu)$. 
\end{proof}

\

Now we prove the converse to Mazur's inequality. 

\begin{thm}\label{converse-Mazur}
Let $b \in G(L)$, $\mu \in X_*(T)^+$ and $J \subset \tSS$ such that $\s(J)=J$ and $W_J$ is finite. If $b \in B(G, \mu)$, then $X(\mu, b)_J \neq \emptyset$. 
\end{thm}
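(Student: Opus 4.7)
The plan is to assemble the theorem from ingredients already established in the paper, with essentially no new work beyond bookkeeping. First I would observe that since $\Adm(\mu) \subset \Adm^J(\mu)$, Corollary~\ref{mu-closure} reduces the problem to producing a $w \in \Adm(\mu)$ with $[b] \in \Psi(\{w\})$, or more flexibly, to showing that $[b] \in \Psi(\Adm(\mu))$.

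Next, using the bijection between $B(\tW, \s)_{str}$ and $B(G)$ described in \S\ref{tri}, I would let $\co \in B(\tW, \s)_{str}$ be the unique straight $\s$-conjugacy class with $[\co] = [b]$. The commutativity of the diagram in \S\ref{tri} (a) converts the Kottwitz-style invariants of $[b]$ into invariants of $\co$: namely, $\k(\co) = \k_G(b) = \mu^\sharp$ and $\nu_\co = \nu_b \le \mu^\diamond$. Thus $\co$ satisfies precisely the hypotheses of Proposition~\ref{5.1}.

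Applying Proposition~\ref{5.1}, we obtain $\co \cap \Adm(\mu) \neq \emptyset$. Pick any $w \in \co \cap \Adm(\mu) \subset \Adm^J(\mu)$; then $\Psi(w) = [\co] = [b]$, so $[b] \in \Psi(\Adm^J(\mu))$. Corollary~\ref{mu-closure} now yields $X(\mu, b)_J \neq \emptyset$.

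There is no real obstacle here: all of the substantive content has been absorbed into earlier results. Proposition~\ref{5.1} is what makes this work, and it in turn rests on two pillars --- the structural input from \cite{HN-adm} (existence of a maximal $\nu$ realized inside $\Adm(\mu)$) and Theorem~\ref{tri-order} (which translates the dominance inequality $\nu_\co \le \nu$ into the Bruhat-theoretic statement $\co \preceq_\s x$). So the role of the present theorem is simply to package Proposition~\ref{5.1} together with Corollary~\ref{mu-closure} and the dictionary $\Psi$, and the proof should be only a few lines long.
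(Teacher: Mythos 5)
Your proof is correct and matches the paper's argument step for step: pass to the unique straight $\s$-conjugacy class $\co$ representing $[b]$, apply Proposition~\ref{5.1} to locate an element of $\co$ inside $\Adm(\mu) \subset \Adm^J(\mu)$, and conclude via Corollary~\ref{mu-closure}. Your version is slightly more explicit (spelling out that the diagram in \S\ref{tri} identifies the invariants of $[b]$ with those of $\co$, which is what verifies the hypotheses of Proposition~\ref{5.1}), but the route is identical.
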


\begin{proof}
Let $b \in B(G, \mu)$. Then $[b]$ is represented by a straight $\s$-conjugacy class $\co$ of $\tW$. By Proposition \ref{5.1}, $\co \cap \Adm(\mu) \neq \emptyset$. Note that $\Adm(\mu) \subset \Adm^J(\mu)$. Hence $\co \cap \Adm^J(\mu) \neq \emptyset$. By Corollary \ref{mu-closure}, $X(\mu, b)_J \neq \emptyset$. 
\end{proof}

\section{Mazur's inequality: Iwahori case} 

To prove Mazur's inequality in the Iwahori case, we need the following additivity property of admissible sets due to Xinwen Zhu \cite{Z2}. 

\begin{thm}\label{add}
Let $\mu, \mu' \in X_*(T)^+$. Then $$\Adm(\mu) \Adm(\mu')=\Adm(\mu+\mu').$$ 
\end{thm}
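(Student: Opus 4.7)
The plan is to use Zhu's construction of global Schubert varieties to reduce the combinatorial identity $\Adm(\mu)\Adm(\mu') = \Adm(\mu+\mu')$ in $\tW$ to the classical saturation property of Schubert varieties in the affine Grassmannian of a split group. The geometry packages both inclusions ``$\subseteq$'' and ``$\supseteq$'' into a single statement about images of convolution morphisms, so that neither direction has to be handled combinatorially by hand.

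First I would recall Zhu's global Schubert variety $\mathcal{G}_\mu$: a flat projective scheme over a smooth curve $X$ with a distinguished point $x_0 \in X$, whose generic fiber is the usual Schubert variety $\overline{\mathrm{Gr}_\mu}$ in the affine Grassmannian of a split form of $G$, and whose special fiber at $x_0$ is $\bigcup_{w \in \Adm(\mu)} \overline{IwI/I}$ inside the affine flag variety of $G$. This flat degeneration is the essential input, as it realizes $\Adm(\mu)$ geometrically as the indexing set of Schubert cells appearing in the special fiber.

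Next I would form the twisted product $\mathcal{G}_\mu \,\tilde{\times}\, \mathcal{G}_{\mu'}$ over $X$ together with the convolution morphism $m\colon \mathcal{G}_\mu \,\tilde{\times}\, \mathcal{G}_{\mu'} \to \mathcal{G}$ into the global affine Grassmannian. Over the generic point of $X$, $m$ specializes to the classical convolution $\overline{\mathrm{Gr}_\mu} \,\tilde{\times}\, \overline{\mathrm{Gr}_{\mu'}} \to \overline{\mathrm{Gr}_{\mu+\mu'}}$, whose image is all of $\overline{\mathrm{Gr}_{\mu+\mu'}}$ by the standard saturation statement for dominant coweights in the split case. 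Because the source is flat and proper over $X$, the scheme-theoretic image of $m$ equals the flat closure of its generic image, namely $\mathcal{G}_{\mu+\mu'}$. Restricting to $x_0$ and using the special-fiber description from the first step, the convolution of $\bigcup_{w \in \Adm(\mu)} \overline{IwI/I}$ with $\bigcup_{w' \in \Adm(\mu')} \overline{Iw'I/I}$ surjects onto $\bigcup_{v \in \Adm(\mu+\mu')} \overline{IvI/I}$. Reading off the Iwahori double cosets on both sides and using the bijection $w \leftrightarrow IwI/I$ yields precisely the desired equality $\Adm(\mu)\Adm(\mu') = \Adm(\mu+\mu')$.

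The main obstacle is the bookkeeping that matches the two natural closure operations: convolution of Iwahori Schubert cells on the geometric side versus Bruhat closure in $\tW$ on the combinatorial side. One must verify that $\overline{IwI/I} \,\tilde{\times}\, \overline{Iw'I/I}$ maps under convolution to $\overline{IwI \cdot Iw'I}/I$, and that this image is a union of $\overline{IvI/I}$ whose indexing set is Bruhat-closed; equivalently, that the $I$-double cosets appearing in $IwI \cdot Iw'I$ form a lower-closed subset of $\tW$. Granted this standard multiplicativity of Iwahori orbits and Zhu's flat degeneration, the identity follows purely from comparing generic and special fibers, with no induction on elements of $\tW$ required.
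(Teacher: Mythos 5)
Your plan rests on exactly the same geometric input as the paper --- Zhu's global Schubert varieties, flatness, and the fact that the scheme-theoretic image of the convolution has special fiber $\bigsqcup_{v\in\Adm(\mu+\mu')}IvI/I$ --- so this is not a genuinely different route. Where you and the paper diverge is in how the combinatorial identity is extracted from that geometry. The paper proves the inclusion $\Adm(\mu+\mu')\subset\Adm(\mu)\Adm(\mu')$ by a short direct argument (split a reduced expression of $t^{x(\underline\mu+\underline\mu')}=t^{x(\underline\mu)}t^{x(\underline\mu')}$ and apply the subword characterization of Bruhat order), and uses the geometry only for the reverse inclusion, where the needed deduction is trivial: $Iww'I\subset IwIw'I\subset$ (special fiber) $=\bigsqcup_{v\in\Adm(\mu+\mu')}IvI/I$. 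You instead try to read off both inclusions from the single geometric equality, and declare the residual combinatorics to be routine bookkeeping.

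That bookkeeping is where your write-up has a genuine error. You claim that ``the $I$-double cosets appearing in $IwI\cdot Iw'I$ form a lower-closed subset of $\tW$,'' but this is false: if $\ell(ww')=\ell(w)+\ell(w')$ then $IwI\cdot Iw'I=Iww'I$ is a single double coset, which is certainly not Bruhat-closed. What is true --- and what your argument actually needs --- is the statement for the \emph{closed} cells: the set of $v$ with $IvI\subset\overline{IwI}\cdot\overline{Iw'I}=\bigcup_{u\le w,\,u'\le w'}IuIu'I$ is exactly $\{v: v\le w\ast w'\}$ (Demazure product), which coincides with $\{ab: a\le w,\ b\le w'\}$. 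Proving that last equality is the key step: one writes $v\le w\ast w'$ as a subword of a concatenated reduced expression of $w$ and $w'$ and splits it into a left factor $a\le w$ and a right factor $b\le w'$. But this is precisely the subword-splitting argument the paper uses for its ``easy'' direction. So your reformulation does not actually eliminate the combinatorial step; it relocates it, and in the process you have stated it incorrectly. Once the claim is corrected to refer to closed Schubert cells and the subword argument is supplied, your proof is valid, but it ends up costing the same combinatorial effort as the paper's split proof.
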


\begin{proof}
We first show that $\Adm(\mu+\mu') \subset \Adm(\mu) \Adm(\mu')$. 

Let $z \in \Adm(\mu+\mu')$. By definition, $z \le t^{x(\underline \mu+\underline \mu')}$ for some $x \in W_0$. Notice that $t^{x(\underline \mu+\underline \mu')}=t^{x(\underline \mu)} t^{x(\underline \mu')}$ and $\ell(t^{x(\underline \mu+\underline \mu')})=\ell(t^{x(\underline \mu)}) \ell(t^{x(\underline \mu')})$. In other words, there exists a reduced expression of $t^{x(\underline \mu+\underline \mu')}$ consisting of two parts, the first part is a reduced expression of $t^{x(\underline \mu)}$ and the second part is a reduced expression of  $t^{x(\underline \mu')}$. Hence there exists $z_1 \le t^{x(\underline \mu)} \in \Adm(\mu)$ and $z_2 \le t^{x(\underline \mu')} \in \Adm(\mu')$ such that $z=z_1 z_2$. 

The proof of the other direction $\Adm(\mu) \Adm(\mu') \subset \Adm(\mu+\mu')$ is based on the theory of global Schubert varieties of Zhu \cite{Z}. We first recall the definition. 

Let $L=\bar \FF_q((\e))$ and $G$ be a connected reductive group over $L$, split over a tamely ramified extension, and with Iwahori-Weyl group $\tW$. Let $\cg$ be the Iwahori group scheme over $\co_L$. The element $\mu \in X_*(T)$ defines a section $s_\mu$ of the global affine Grassmannian $Gr_{\cg}$ as in \cite[Proposition 3.4]{Z}. The global Schubert variety $\overline{Gr_{\cg, \mu}}$ is the scheme-theoretic closure of the $\cl^+ \cg \cdot s_\mu$ in $Gr_\cg$, where $\cl^+\cg$ is the positive loop group. It is a scheme over $\co_F$. One of the main result of \cite{Z} is that the special fiber of $\overline{Gr_{\cg, \mu}}$  is isomorphic to $\sqcup_{w \in \Adm(\mu)} I w I/I$. 

Now we take the convolution product of $\overline{Gr_{\cg, \mu}}$ with $\overline{Gr_{\cg, \mu}}$ as in \cite[\S 6]{Z}. By definition, the special fiber of the convolution product is isomorphic to $\cup_{w \in \Adm(\mu), w' \in \Adm(\mu')} I w I w' I/I \supset \cup_{z \in \Adm(\mu) \Adm(\mu')} I z I/I$. On the other hand, it is proved in \cite[\S 6]{Z} that the special fiber is isomorphic to $\sqcup_{z \in \Adm(\mu+\mu')} I z I$. Hence $\Adm(\mu) \Adm(\mu') \subset \Adm(\mu+\mu')$. 
\end{proof}

Now we prove Mazur's inequality in the Iwahori case. 

\begin{thm}\label{Mazur}
Let $b \in G(L)$ and $\mu \in X_*(T)^+$. If $X(\mu, b)_\emptyset \neq \emptyset$, then $b \in B(G, \mu)$. 
\end{thm}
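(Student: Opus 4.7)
The plan is to reduce the problem to a statement about straight $\s$-conjugacy classes meeting $\Adm(\mu)$, and then exploit the additivity of admissible sets to iterate and extract the Newton inequality.

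First, by Corollary \ref{mu-closure} (with $J=\emptyset$), the hypothesis $X(\mu,b)_\emptyset \neq \emptyset$ is equivalent to $[b] \in \Psi(\Adm(\mu))$. By the commutative diagram of \S\ref{tri}(a) and the fact that $\Psi$ restricted to $B(\tW,\s)_{str}$ is a bijection onto $B(G)$, there is a straight $\s$-conjugacy class $\co$ with $[\co]=[b]$ and $\co \preceq_\s w$ for some $w \in \Adm(\mu)$. By the definition of $\preceq_\s$, pick $x \in \co_{\min}$ with $x \le w$; by Corollary \ref{min-min}, $x$ is $\s$-straight, and since $\Adm(\mu)$ is closed under the Bruhat order, $x \in \Adm(\mu)$.

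To check the Kottwitz invariant, observe that since the Bruhat order preserves cosets of $W_a$ in $\tW=W_a \rtimes \Omega$, the elements $x$, $w$, and $t^{x_0(\underline\mu)}$ (for the $x_0 \in W_0$ witnessing $w \in \Adm(\mu)$) all project to the same element of $\Omega=\pi_1(G)_\G$. Composing with $\pi_1(G)_\G \to \pi_1(G)_{\G_F}$ and using the compatibility of $\k_G$ with $\Psi$, this yields $\k_G(b)=\mu^\sharp$.

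For the Newton inequality, the key is to iterate additivity. Let $N$ be the order of $\s_0$ and choose $m$ a positive integer, divisible by $N$ and sufficiently divisible so that $(x\s)^m = t^{m\nu_{x,\s}}\s^m$ in $\tW \rtimes \<\s\>$; this is possible because $x$ is $\s$-straight. In particular,
\[
x \cdot \s(x) \cdot \s^2(x) \cdots \s^{m-1}(x) = t^{m\nu_{x,\s}} \quad \text{in }\tW.
\]
Since $\s$ acts on $\tW$ by a length- and Bruhat-preserving automorphism and its induced action on $X_*(T)_\G$ agrees with the pinned action $\s_0$, we have $\s^i(\Adm(\mu))=\Adm(\s_0^i(\mu))$ for every $i$. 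Iterating Theorem \ref{add},
\[
t^{m\nu_{x,\s}} \in \Adm(\mu)\cdot\Adm(\s_0(\mu))\cdots \Adm(\s_0^{m-1}(\mu)) = \Adm\Bigl(\sum_{i=0}^{m-1}\s_0^i(\mu)\Bigr)=\Adm(m\mu^\diamond),
\]
using that $m$ is a multiple of $N$. Hence $t^{m\nu_{x,\s}} \le t^{y(m\underline{\mu^\diamond})}$ for some $y \in W_0$.

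To finish, apply the standard fact that if $\lambda,\lambda' \in X_*(T)_\G$ with $\lambda'$ dominant and $t^\lambda \le t^{\lambda'}$ in $\tW$, then $\lambda$ lies in the convex hull of the $W_0$-orbit of $\lambda'$, and in particular $\bar\lambda \le \lambda'$ in the dominance order on $X_*(T)_\QQ^+$. Applied to $\lambda=m\nu_{x,\s}$ and $\lambda'=m\underline{\mu^\diamond}$ and dividing by $m$, this gives $\bar\nu_{x,\s} \le \mu^\diamond$. Since $\nu_b=\nu_\co=\bar\nu_{x,\s}$, combined with the Kottwitz invariant computation we conclude $[b] \in B(G,\mu)$.

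The main subtlety I expect is the compatibility between the $\s$-action on $\tW$ and the admissible sets, namely the identification $\s^i(\Adm(\mu))=\Adm(\s_0^i(\mu))$; this rests on the fact that on the coinvariants $X_*(T)_\G$ the inner twisting part $\t$ of $\s=\t\circ\s_0$ is absorbed, so the induced action is genuinely the pinned one. Everything else is a direct application of Corollary \ref{mu-closure}, Theorem \ref{add}, and standard translation-Bruhat-order combinatorics.
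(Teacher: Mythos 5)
Your proof is correct and follows essentially the same route as the paper: reduce via Corollary \ref{mu-closure}/Theorem \ref{w-closure} to an element of $\Adm(\mu)$ in the relevant $\sigma$-conjugacy class, iterate Theorem \ref{add} together with $\s^i(\Adm(\mu))=\Adm(\s_0^i(\mu))$ to land in $\Adm(m\mu^\diamond)$, and read off the Newton inequality. The only cosmetic differences are that you explicitly pass to a $\s$-straight representative (which is not actually needed — any $w\in[b]\cap\Adm(\mu)$ works, since $\nu_{w,\s}$ is defined for all $w$) and that you spell out the Kottwitz-invariant check, which the paper leaves implicit.
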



\begin{proof}
Recall that $\s=\t \circ \s_0$, where $\s_0$ is a diagram automorphism of $G(L)$ such that $\s_0$ fixes $\tSS-\SS$ and the induced action of $\t$ on the adjoint group $G_{ad}$ is inner. For any $\mu \in X_*(T)^+$, $\s_0(\Adm(\mu))=\Adm(\s_0(\mu))$. Note that $\t(\mu)=x(\mu)$ for some $x \in W_0$. Thus $\t(\Adm(\mu))=\Adm(\mu)$.  Therefore $$\s(\Adm(\mu))=\Adm(\s_0(\mu)).$$ 

By Theorem \ref{w-closure}, $X(\mu, b)_\emptyset \neq \emptyset$ implies that $w \in [b]$ for some $w \in \Adm(\mu)$. Let $n_0$ be the order of $\s$ in $\text{Aut}(\tW)$ and $n=n_0 \sharp(W_0)$. We regard $w \s$ as an element in $\tW \rtimes \<\s\>$. Then $(w \s)^{n_0} \in \tW$ and $(w \s)^n=t^\l$ for some $\l \in X_*(T)$. By definition, $\l$ lies in the $W_0$-orbit of $n \nu_\co$. On the other hand, 

\begin{align*}
(w \s)^n &=w \s(w) \s^2(w) \cdots \s^{n-1}(w) \\ & \in \Adm(\mu) \Adm(\s_0(\mu)) \Adm(\s_0^2(\mu)) \cdots \Adm(\s_0^{n-1}(\mu)) \\ &=\Adm(\mu+\s_0(\mu)+\cdots+\s_0^{n-1}(\mu)) \\ &=\Adm(n \mu^\diamond)
\end{align*}

Hence $t^{\l} \in \Adm(n \mu^\diamond)$ and $\bar \l \le n \mu^\diamond$. Thus $\nu_\co \le \mu^\diamond$. 
\end{proof}

\section{Mazur's inequality: General case}

\subsection{}\label{6-1} To pass from Iwahori case to the general case, we need part (2) of Theorem \ref{main}. There are two key ingredients in the proof. 

\begin{enumerate}[(a)]

\item A suitable stratification of $Y_{J, \mu}$ with respect to the $\s$-conjugation action of $P_J$. 

\item A compatibility property of admissible sets.

\end{enumerate}

\subsection{}\label{6-3} We discuss \S\ref{6-1}(a) first. The stratification is established in \cite[\S2]{He-aff} and \cite[\S3]{GH}, generalizing Lusztig's $G$-stable piece decomposition for the finite case. 

Let $J=\s(J) \subset \tSS$ with $W_J$ finite. For any $w \in {}^J \tW$, we consider the subset $P_J \cdot_\s I w I$ of $G(L)$. Then

\begin{enumerate}

\item $G(L)=\sqcup_{w \in {}^J \tW} P_J \cdot_\s I w I$. 

\item $Y_{J, \mu}=\sqcup_{w \in {}^J \tW \cap \Adm^J(\mu)} P_J \cdot_\s I w I$. 

\item If $F=\FF_q((\e))$, then for $w \in {}^J \tW$, $$\overline{P_J \cdot_\s I w I}=\sqcup_{w'} P_J \cdot_\s I w' I,$$ where $w'$ runs over elements in ${}^J \tW$ such that there exists $x \in W_J$ with $x w \s(x) \i \le w'$. 

\end{enumerate}

Then we discuss the following compatibility result on the sets ${}^J \tW \cap \Adm^J(\mu)$. 

\begin{thm}\label{comp}
Let $\mu \in X_*(T)^+$ and $J \subset \tSS$ with $W_J$ finite. Then ${}^J \tW \cap \Adm^J(\mu)={}^J \tW \cap \Adm(\mu)$. 
\end{thm}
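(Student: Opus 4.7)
The inclusion $\supset$ is trivial since $\Adm(\mu)\subset\Adm^J(\mu)$. For the nontrivial direction, fix $w\in{}^J\tW\cap\Adm^J(\mu)$ and write $w=u_1 v u_2$ with $u_1,u_2\in W_J$ and $v\in\Adm(\mu)$; the goal is to show $w\in\Adm(\mu)$, i.e., to produce $y\in W_0$ with $w\le t^{y(\underline\mu)}$.

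I would first normalize $v$. Since $\Adm(\mu)$ is closed under the Bruhat order, the unique minimal length element of $W_J v W_J$ still lies in $\Adm(\mu)$, so I may replace $v$ by this element and assume $v\in{}^J\tW^J\cap\Adm(\mu)$; in particular $\ell(vu_2)=\ell(v)+\ell(u_2)$. The hypothesis $w\in{}^J\tW$ yields $\ell(u_1^{-1}w)=\ell(u_1)+\ell(w)$, and since $u_1^{-1}w=vu_2$ this shows that $u_1^{-1}\cdot w$ is a reduced product, so by the subword property $w\le vu_2$.

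The heart of the argument is then to show $w\in\Adm(\mu)$, and my plan is to apply the partial conjugation method of \cite{HeMin} and proceed by induction on $\ell(u_1)+\ell(u_2)$. In the inductive step I would pick a simple reflection $s\in J$ appearing in, say, $u_2=u_2's$ with $\ell(u_2)=\ell(u_2')+1$ and analyze two cases. If $vs<v$ then $vs\in\Adm(\mu)$ by downward closure, so the new factorization $w=u_1(vs)u_2'$ has strictly smaller $\ell(u_2)$ and induction applies. If instead $vs>v$, then the exchange/Deodhar condition applied to the reduced product $vu_2$ together with the constraint $w\in{}^J\tW$ forces a reshuffling that either propagates a simple reflection from $u_2$ across $v$ into $u_1$, or cancels one reflection on each side, in such a way that one stays inside $\Adm(\mu)$ and decreases $\ell(u_1)+\ell(u_2)$. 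A symmetric analysis handles simple reflections pulled off $u_1$.

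The main obstacle is the case $vs>v$: right multiplication by $s$ can push $v$ out of the Bruhat interval $[e,t^{x(\underline\mu)}]$ for the particular $x\in W_0$ that witnessed $v\in\Adm(\mu)$, so one cannot simply absorb $s$ into $v$. Resolving this requires exploiting the $W_0$-symmetry of the collection $\{t^{x(\underline\mu)}\}_{x\in W_0}$ of maximal elements of $\Adm(\mu)$: one must find a different $y\in W_0$ with respect to which the rearranged factorization lands below $t^{y(\underline\mu)}$. Packaging precisely this bookkeeping across all simple reflections of $u_1$ and $u_2$ is what the partial conjugation method of \cite{HeMin} is designed to carry out, and its application here terminates with $w\le t^{y(\underline\mu)}$ for some $y\in W_0$, completing the proof.
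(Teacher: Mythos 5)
There is a genuine gap at exactly the point you flag as ``the main obstacle,'' and in fact the gap is worse than you suggest. After you normalize $v$ to lie in ${}^J\tW^J$, $v$ is the shortest element of $vW_J$, so $vs>v$ holds for \emph{every} $s\in J$; the ``easy'' case $vs<v$ of your inductive step is then vacuous, and the entire burden falls on the case you leave unresolved. Your resolution of that case --- ``exploiting the $W_0$-symmetry\ldots\ Packaging precisely this bookkeeping\ldots\ is what the partial conjugation method of \cite{HeMin} is designed to carry out'' --- is an appeal to a method's reputation, not an argument: you neither produce the reshuffling nor show it terminates or preserves membership in $\Adm(\mu)$. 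A telling sign that something essential is missing is that nowhere in your sketch does the hypothesis that $W_J$ is finite intervene, whereas this finiteness is exactly what the paper's proof exploits.

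The paper proceeds quite differently. Rather than inducting on a factorization of $w$, it fixes a Bruhat-maximal element $\max(W_Jt^\gamma W_J)$ of $\Adm^J(\mu)$ dominating $w$ and applies the specific conjugacy normal form of \cite[Corollary~2.6]{HeMin} to the translation $t^\gamma$: this produces a $W_J$-conjugate $t^\l=xw_1$ with $w_1\in{}^J\tW$ and $x\in W_{I(J,w_1)}$. The key step is root-theoretic: $t^\l$ stabilizes the affine root subsystem $\Phi_1$ attached to $I(J,w_1)\subset J$, so $t^\l(\a)-\a$ (a constant affine function) lies in the root lattice of $\Phi_1$; but because $W_{I(J,w_1)}$ is finite, this lattice contains no nonzero constant, forcing $\langle\l,\a\rangle=0$ for all $\a\in\Phi_1$, hence $x=1$ and $t^\l\in{}^J\tW$. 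A second short root argument then yields $w\le t^\l\in\Adm(\mu)$ directly. To close your sketch you would need to supply an analogue of this finiteness/root-lattice input; as written, the inductive machinery never makes progress in the only case that actually occurs.
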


\begin{proof}
Let $\Phi$ be the relative root system and $\Phi_a$ be the affine root system, which is a set of affine functions on $V=X_*(S) \otimes \RR$ of the form $\b+r$ for $\b \in \Phi$ and $r \in \RR$. 

Let $w \in {}^J \tW \cap \Adm^J(\mu)$. Then $w \le \max(W_J t^\g W_J)$ for some $\g \in W_0 \cdot \underline \mu$. 

We first show that 

(a) $w \le \max(t^\l W_J)$ for some $\l \in W_0 \cdot \underline \mu$ with $t^\l \in {}^J \tW$. 

For $y \in {}^J \tW$, we set $I(J, y)=\max\{K \subset J; y(K)=K\}$. By \cite[Corollary 2.6]{HeMin}, $t^\g$ is conjugate by an element in $W_J$ to an element $z=x w_1$, where $w_1 \in {}^J \tW$ and $x \in W_{I(J, w_1)}$. Since $z$ is conjugate to $t^\g$, it is of the form $t^{\l}$ for some $\l \in W_0 \cdot \underline \mu$. 

Let $\Phi_1$ be the root system associated to $I(J, w_1)$. By definition, for any $\a \in \Phi_1$, $t^\l(\a) \in \Phi_1$. Therefore $t^{\l}(\a)-\a=\<\l, \a\>$ is in the root lattice of $\Phi_1$. However, any nonzero $r \in \Phi_a$ is not spanned by $K$ for any $K \subset \tSS$ with $W_K$ finite. Hence $\<\l, \a\>=0$ and $t^\l(\a)=\a$ for all $\a \in \Phi_1$. In particular, $t^\l \in {}^{I(J, w_1)} \tW$. Since $w_1 \in {}^{I(J, w_1)} \tW$ and $t^\l \in W_{I(J, w_1)} w_1$, we must have $x=1$. 

(a) is proved. 

We may write $\max(t^\l W_J)$ as $a b$, where $a \in W_J$ and $b \in {}^J \tW$. Since $t^\l \in {}^J \tW$, $b=t^\l y$ for some $y \in W_J$ with $\ell(t^\l y)=\ell(t^\l)+\ell(y)$. If $y \neq 1$, then $s_i y<y$ for some $i \in J$. Let $\a_i$ be the simple root associated to $s_i$. Since $t^\l \in {}^J \tW$, $t^{-\l} \in \tW^J$. Hence $t^{-\l} (\a_i)=\a_i-\<\l, \a\>$ is a positive affine root. 
Hence $\<\l, \a_i\> \le 0$. 

If $\<\l, \a_i\><0$, then $t^\l(\a_i)$ is a negative affine root and $t^\l s_i<t^\l$, which contradicts the fact that  $\ell(t^\l y)=\ell(t^\l)+\ell(y)$. If $\<\l, \a_i\>=0$, then $t^\l y=s_i t^\l (s_i y)$, which contradicts the fact that $t^\l y \in {}^J \tW$. Therefore $y=1$ and $w \le t^\l$. 
\end{proof}

\subsection{} We prove Theorem \ref{main} (2). 

Let $J \subset \tSS$ such that $\s(J)=J$ and $W_J$ is finite. Recall that $Y_{J, \mu}=\cup_{w \in \Adm(\mu)} P_J w P_J$. By \S\ref{6-3} and Theorem \ref{comp},
\begin{align*} Y_{J, \mu} &=\cup_{x \in \Adm^J(\mu) \cap {}^J \tW} P_J \cdot_\s I x I \\ &=\cup_{x \in \Adm(\mu) \cap \tW} P_J \cdot_\s I x I \\ & \subset P_J \cdot_\s Y_{\emptyset, \mu}.\end{align*} Therefore, \[\tag{a} Y_{J, \mu}=P_J \cdot_\s Y_{\emptyset, \mu}.\]

For any $J \subset J' \subset \tSS$ with $\s(J)=J$, $\s(J')=J'$ and $W_{J'}$ finite, we have
\[\tag{b} Y_{J', \mu}=P_{J'} \cdot_\s Y_{\emptyset, \mu}=P_{J'} \cdot_\s (P_J \cdot_\s Y_{\emptyset, \mu})=P_{J'} \cdot_\s Y_{J, \mu}.\]

Now $$\pi_{J, J'} (X(\mu, b)_J)=\{g P_{J'} \in G(L)/P_{J'}; g \i b \s(g) \in P_{J'} \cdot_\s Y_{J, \mu}\}=X(\mu, b)_{J'}.$$ In other words, $\pi_{J, J'}$ is surjective. 

\subsection{} Now we prove Mazur's inequality for $J$. 


If $X(\mu, b)_J \neq \emptyset$, then by Theorem \ref{main} (2), $X(\mu, b)_\emptyset \neq \emptyset$. By Theorem \ref{Mazur}, $b \in B(G, \mu)$. 

\appendix

\section{Admissibility}

\subsection{}\label{In} In the appendix, we assume that $F=\FF_q((\e))$. We first recall the Moy-Prasad filtration \cite{MP}. 

Let $v$ be a generic point in the base alcove $\aa$. For any $r \ge 0$, let $I_r$ be the subgroup of $I$ generated by a suitable subgroup of $T(L)_1$ and $U_\phi$, where $\phi$ runs over all the affine roots with $\phi(v) \ge r$. By definition, if $x \in \tW$ with $\ell(x)<r$, then $h I_r h \i \subset I_{r-\ell(x)} \subset I$ for any $h \in I x I$. 

\subsection{} A subset $V$ of $G(L)$ is called {\it admissible} if for any $w \in \tW$, there exists $r \ge 0$ such that $\cup_{w' \le w} (V \cap I w' I)$ is stable under the right action of $I_r$. This is equivalent to say that for any $w \in \tW$, there exists $r' \ge 0$ such that $V \cap I w I$ is stable under the right action of $I_{r'}$. 

An admissible subset $V$ of $G(L)$ is a locally closed subscheme if for any $w \in \tW$ and $r \ge 0$ such that $\cup_{w' \le w} (V \cap I w' I)$ is stable under the right action of $I_r$, $\cup_{w' \le w} (V \cap I w' I)/I_r$ is a locally closed subscheme of $\overline{I w I/I_r}=\cup_{w' \le w} I w' I/I_r \subset G(L)/I_r$. 

We define the closure of a locally closed subscheme $V$ in $G$ as follows. 

Let $w \in \tW$. Let $r \ge 0$ such that $\cup_{w' \le w} (V \cap I w' I)$ is stable under the right action of $I_r$. Let $V_w$ be the inverse image under the projection $G(L) \to G(L)/I_r$ of the closure of $\cup_{w' \le w} (V \cap I w' I)/I_r$ in $G(L)/I_r$. Then it is easy to see that $V_w$ is independent of the choice of $r$. Moreover, if $w' \le w$, then $V_{w'} \subset V_w$. Set $$\overline{V}=\varinjlim_w V_w.$$

\begin{thm}\label{adm}
Let $[b]$ be a $\s$-conjugacy class of $G(L)$. Then $[b]$ is admissible. 
\end{thm}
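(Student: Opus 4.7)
The plan is to fix $w \in \tW$, set $n = \ell(w)$, and exhibit $r \ge 0$ depending only on $w$ and $b$ such that $[b] \cap IwI$ is stable under the right action of $I_r$; letting $w$ vary then yields admissibility of $[b]$. For $g \in [b] \cap IwI$ and $u \in I_r$, the condition $gu \in [b]$ is equivalent to solving $\psi_g(h) = u$ for some $h \in G(L)$, where $\psi_g(h) := g^{-1} h^{-1} g\,\s(h)$. I would look for $h$ inside a deep congruence subgroup $I_N$ with $N > n$: by the filtration property recalled in \S\ref{In}, $g^{-1} I_N g \subseteq I_{N-n}$, and since $\s(I_N) = I_N$, also $\psi_g(I_N) \subseteq I_{N-n}$. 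It therefore suffices to show $\psi_g(I_N) \supseteq I_r$ for some $r \ge N - n$.

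The key computation is the first-order expansion of $\psi_g$ at $h = 1$, which gives the $\s$-linear operator
\begin{equation*}
D\psi_g(\eta) \;=\; \s(\eta) \,-\, \mathrm{Ad}(g^{-1})\,\eta.
\end{equation*}
Using the Iwahori decomposition of $I_N$ into affine root subgroups $U_\phi$ (for affine roots $\phi$ with $\phi(v) \ge N$) and the fact that $\mathrm{Ad}(g^{-1})$ permutes these subgroups according to $w^{-1}$ (shifting the affine-root grading by amounts bounded by $n$), I would analyze $D\psi_g$ on each graded piece. The resulting operator on each finite-dimensional $\kk$-vector space is Frobenius-like, and Lang's theorem produces surjectivity of $D\psi_g$ onto a suitable deep $I_r \subseteq I_{N-n}$ after possibly enlarging $N$.

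Infinitesimal surjectivity is then promoted to genuine surjectivity of $\psi_g$ by a Hensel/Newton iteration: for a given $u \in I_r$, set $h_0 = 1$ and, having constructed $h_k \in I_N$ with $\psi_g(h_k)\,u^{-1} \in I_{r+k}$, choose $e_k \in I_{N+k}$ solving $D\psi_g(e_k) \equiv u\,\psi_g(h_k)^{-1} \pmod{I_{r+k+1}}$ and put $h_{k+1} = h_k e_k$. Completeness of the congruence topology forces the sequence to converge to $h \in I_N$ with $\psi_g(h) = u$, so $gu \in [b]$. All estimates depend only on $n = \ell(w)$ and the fixed data $(I, \s)$, so the threshold $r$ is uniform in $g \in [b] \cap IwI$, completing the admissibility verification for this $w$.

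The main obstacle will be the infinitesimal surjectivity of $D\psi_g$ on each graded piece. Different affine root subgroups are shifted by different amounts under $\mathrm{Ad}(g^{-1})$, all bounded by $n$, so matching the affine-root gradings on source and target requires some care; one must also verify that the composition $\s \circ \mathrm{Ad}(g^{-1})^{-1}$ behaves as a Frobenius endomorphism of each graded $\kk$-vector space, so that Lang's theorem applies. Once this is in place, both the Hensel lift and the uniformity of $r$ in $g$ should be essentially formal.
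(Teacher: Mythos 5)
Your approach is genuinely different from the paper's, but I believe the central Newton iteration does not close as written. The difficulty is that $\text{Ad}(g^{-1})$ shifts the affine-root filtration in \emph{both} directions: on some affine root subgroups $U_\phi$ it deepens the level $\phi(v)$, on others it shallows it, with shifts of magnitude up to $n=\ell(w)$. In $D\psi_g(\eta)=\s(\eta)-\text{Ad}(g^{-1})\eta$ this means neither term is uniformly dominant, and the operator does not respect the level grading: $\text{Ad}(g^{-1})$ sends the level-$k$ piece into a range of levels $[k-n,\,k+n]$ rather than preserving it. If you try the approximate inverse $e=-\text{Ad}(g)\xi$ to solve $D\psi_g(e)\equiv\xi$ at level $r$, the error $\s(e)$ lies at level $\geq r-n$, i.e.\ \emph{shallower} than $\xi$; using $e=\s^{-1}(\xi)$ instead leaves an error $\text{Ad}(g^{-1})\s^{-1}(\xi)$ also at level $\geq r-n$. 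So the iteration as you wrote it loses up to $n$ levels per step on some directions and does not converge. One can try to argue piecewise (one approximate inverse on the directions $\text{Ad}(g^{-1})$ deepens, the other on the directions it shallows, and a twisted Lang map on level-preserving directions), but these three subspaces are not stable under $\s$ and $\text{Ad}(g^{-1})$, and decoupling them is essentially a slope-decomposition statement that is not formal. You also assert but do not establish uniformity of the threshold $r$ over $g\in[b]\cap IwI$: writing $g=i_1wi_2$ with $i_1,i_2\in I$, the corrections to $\text{Ad}(w^{-1})$ contributed by $i_1,i_2$ range over all of $I$, and your sketch does not control them.

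The paper avoids all of this by choosing a $\s$-straight $w\in[b]$, so that $[b]=G(L)\cdot_\s IwI$ by \S\ref{g-stable}(3), and then invoking the Rapoport--Zink finiteness theorem \cite[Theorem 1.4]{RZ}: for each $y\in\tW$ there is a uniform $n$ such that every $g\in[b]\cap IyI$ is $\s$-conjugated into $IwI$ by some $h\in IzI$ with $\ell(z)<n$. The Moy--Prasad filtration property of \S\ref{In} then gives $h^{-1}gI_n\s(h)\subset h^{-1}g\s(h)I_{n-\ell(z)}\subset IwI$, hence $gI_n\subset G(L)\cdot_\s IwI=[b]$. The genuinely non-formal input is the finiteness theorem, which supplies exactly the bounded conjugator and the uniformity in $g$ that your iteration is missing; your proposal offers no substitute for it.
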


\begin{rem}
For split groups, this is first proved by Hartl and Viehmann in \cite{HV}. 
\end{rem}

\begin{proof}
Let $w$ be a $\s$-straight element in $[b]$. By \S \ref{g-stable} (3), $[b]=G(L) \cdot_\s I w I$. Let $y \in \tW$ such that $[b] \cap I y I \neq \emptyset$. By \cite[Theorem 1.4]{RZ}, there exists $n \in \NN$ such that for any $g \in [b] \cap I y I$, $h \i g \s(h) \in I w I$ for some $h \in I z I$ with $\ell(z)<n$. By \S \ref{In}, $h \i g I_n \s(h) \subset h \i g \s(h) I_{n-\ell(z)} \subset I w I$. Hence $g I_n \subset G(L) \cdot_\s I w I=[b]$. The theorem is proved. 
\end{proof}

\

Another admissibility result we need is the following:

\begin{thm}\label{w-closure'}
Let $w \in \tW$. Then $G(L) \cdot_\s I w I$ is admissible and $$\overline{G(L) \cdot_\s I w I}=\cup_{w' \le w} G(L) \cdot_\s I w' I.$$ 
\end{thm}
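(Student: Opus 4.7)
Strategy: induction on $\ell(w)$. I write $Z_w := G(L) \cdot_\s IwI$ and $C_w := \cup_{w' \le w} Z_{w'}$, and prove simultaneously that $Z_w$ is an admissible locally closed subscheme and that $\overline{Z_w} = C_w$. The inclusion $C_w \subset \overline{Z_w}$ is free and uniform in $w$: for each $g \in G(L)$ the $\s$-conjugation map $\mathrm{ad}_g(h) = g h \s(g)^{-1}$ is an automorphism of the ind-scheme $G(L)$, hence sends closures to closures in the sense of the appendix. Combined with the standard identity $\overline{IwI} = \cup_{w' \le w} Iw'I$, this gives $g \cdot_\s \overline{IwI} = \overline{g \cdot_\s IwI} \subset \overline{Z_w}$; taking the union over $g$ yields $C_w \subset \overline{Z_w}$.

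For admissibility and the reverse inclusion I follow the reduction method of \cite{He99}. If $w$ is not of minimal length in its $\s$-conjugacy class, Theorem \ref{ux}(1) gives $w'' \in \tW$ with $w \approx_\s w''$ and $s \in \tSS$ with $sw''\s(s) < w''$. A short length check (left and right conjugation by a simple reflection each change the length by exactly $\pm 1$) forces $\ell(sw'') = \ell(w'')-1$ and $\ell(sw''\s(s)) = \ell(w'')-2$, so both elements are strictly shorter than $w$. Parts (1) and (2) of \S\ref{g-stable} then give $Z_w = Z_{w''} = Z_{sw''} \cup Z_{sw''\s(s)}$. By induction each piece is admissible and locally closed, and closure commutes with the finite union, so
\[
\overline{Z_w} = \overline{Z_{sw''}} \cup \overline{Z_{sw''\s(s)}} = C_{sw''} \cup C_{sw''\s(s)} \subset C_{w''} = C_w.
\]
The penultimate inclusion comes from $sw'', sw''\s(s) \le w''$; the last equality follows from Corollary \ref{2.8} combined with Proposition \ref{preceq} applied to $w \approx_\s w''$ in both directions, which forces the sets $\{\co \preceq_\s w\}$ and $\{\co \preceq_\s w''\}$ of straight $\s$-conjugacy classes to coincide.

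The base case, where $w$ is of minimal length in its $\s$-conjugacy class, is where the main work lies. By Theorem \ref{ux}(2) and parts (1), (3), (4) of \S\ref{g-stable}, $Z_w = Z_{ux} = Z_x = [\co_x]$ is a single $\s$-conjugacy class in $G(L)$, where $x$ is a straight element with $w \approx_\s ux$. Admissibility is then Theorem \ref{adm}. The remaining task is to show $\overline{[\co_x]} \subset C_w = \cup_{\co \preceq_\s x}[\co]$, i.e., to identify the $\s$-conjugacy classes touched by the closure of a single class. For split groups over $\FF_q((\e))$ this can be read off from \cite{HV} together with the dimension formula and purity theorem for affine Deligne-Lusztig varieties \cite{Vi1,Ha1}. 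For a general tamely ramified $G$, I would pass to the adjoint group and reduce to a quasi-split unramified group via the straightening trick from the $(3)\Rightarrow(1)$ step of Theorem \ref{tri-order}: write $\s = \mathrm{Ad}(\t) \circ \s_0$, construct an auxiliary unramified group $H$, and transport the closure statement for $[\co_x]$ back through the bijection between straight $\s$-conjugacy classes of $\tW$ and $\tW_{ad}$. This reduction, rather than the length-theoretic bookkeeping, is the main obstacle.
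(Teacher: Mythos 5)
Your inductive step is sound and correctly mirrors the reduction machinery used in the proof of Theorem~\ref{w-closure}: the splitting of $Z_w$ via \S\ref{g-stable} (1), (2), the length bookkeeping, and the passage from $\{\co \preceq_\s sw''\} \cup \{\co \preceq_\s sw''\s(s)\}$ to $\{\co \preceq_\s w\}$ via Corollary~\ref{2.8} and Proposition~\ref{preceq} are all fine. The easy inclusion $C_w \subset \overline{Z_w}$ is also essentially correct, modulo some care that $g\cdot_\s \overline{IwI}$ and its closure make sense admissibly for each $g$.

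The genuine gap is the base case, and the route you propose to fill it is circular. When $w$ is straight, you need $\overline{[\co_x]} \subset \cup_{\co' \preceq_\s x}[\co']$, which is precisely the $(2) \Rightarrow (1)$ (or equivalently $(2) \Rightarrow (3)$) content of Theorem~\ref{tri-order}. But the paper's proof of $(1) \Leftrightarrow (2)$ in Theorem~\ref{tri-order} \emph{is} Theorem~\ref{w-closure'}: the chain of equalities there begins with $\overline{[\co']} = \overline{G(L)\cdot_\s Iw'I} = \cup_{w\le w'} G(L)\cdot_\s IwI$, and the second equality is exactly the statement you are trying to prove. Likewise, the $(3) \Rightarrow (1)$ step, whose straightening trick you want to borrow, invokes ``the equivalence $(1)\Leftrightarrow(2)$ for $G_{\mathrm{ad}}$'', i.e.\ Theorem~\ref{w-closure'} for the adjoint group. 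So passing to $G_{\mathrm{ad}}$ and then to the unramified twist $H$ does not circumvent the problem: you would still need \ref{w-closure'} for $G_{\mathrm{ad}}$, and in addition you would need to transport closure relations across the quotient $G \to G_{\mathrm{ad}}$ and across the inner twist to $H$, neither of which is a formality at the level of the ind-scheme topology.

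The paper's actual proof is a different, non-inductive argument that sidesteps the need for any prior knowledge of the closure relation among $\s$-conjugacy classes. It first invokes Theorem~\ref{w-closure} (already proved) to see that $V' = \cup_{w'\le w} G(L)\cdot_\s Iw'I$ is a finite union of $\s$-conjugacy classes, then invokes Theorem~\ref{adm} for admissibility, and then, for each $x\in\tW$, uses the Rapoport--Zink boundedness theorem to restrict the $\s$-conjugating element to a fixed bounded set $\cup_{\ell(z)<n} IzI$. This bounded set fits into a twisted product $(\cup_{\ell(z)<n} IzI) \times^{I_n} \overline{IwI}/I_n$ together with a \emph{proper} map to $G(L)/I$, and properness forces the image to be closed and equal to the closure of the image of the open stratum. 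This gives $\overline{V} = V'$ in one stroke, for all $w$, without distinguishing minimal-length $w$. The properness argument is the key tool missing from your outline; without it, the base case remains unproved.
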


\begin{proof}
Set $V=G(L) \cdot_\s I w I$ and $V'=\cup_{w' \le w} G(L) \cdot_\s I w' I$. By Theorem \ref{w-closure}, both $V$ and $V'$ are finite unions of $\s$-conjugacy classes and $V'=\sqcup_{\co \preceq_\s w} [\co]$. By Theorem \ref{adm}, $V$ and $V'$ are admissible. 

Let $x \in \tW$. By \cite[Theorem 1.4]{RZ}, there exists $n \in \NN$ such that 
\begin{gather*}
V \cap \overline{I x I}=(\cup_{z \in \tW, \ell(z)<n} I z I) \cdot_\s I w I \cap \overline{I x I}; \\
V' \cap \overline{I x I}=(\cup_{z \in \tW, \ell(z)<n} I z I) \cdot_\s \overline{I w I} \cap \overline{I x I}.
\end{gather*}

Define the action of $I_n$ on $(\cup_{z \in \tW, \ell(z)<n} I z I) \times G(L)/I_n$ by $h \cdot (g, g')=(g h \i, h g')$. We denote by $(\cup_{z \in \tW, \ell(z)<n} I z I) \times^{I_n} G(L)/I_n$ its quotient. Consider the map $$(\cup_{z \in \tW, \ell(z)<n} I z I) \times G(L)/I_n \to G(L)/I, \qquad (g, g') \mapsto g g' \s(g) \i.$$By \S \ref{In}, it is well-defined. It induces a map $$\pi: (\cup_{z \in \tW, \ell(z)<n} I z I) \times^{I_n} \overline{I w I}/I_n \to G(L)/I.$$ This is a proper map. Hence the image is closed in $G(L)/I$ and is the closure of the image of $(\cup_{z \in \tW, \ell(z)<n} I z I) \times^{I_n} I w I/I_n$. 

Therefore $V' \cap \overline{I x I}$ is closed and is the closure of $V \cap \overline{I x I}$. In other words, $V_x=V' \cap \overline{I x I}$. Hence $$\overline{V}=\varinjlim_x V_x=V'.$$
\end{proof}

\section*{Acknowledgment} We thank T. Haines, M. Rapoport and X. Zhu for useful discussions.

\end{document}